\newcommand{\RR}{\mathbb R}
\newcommand{\CC}{\mathbb C}
\newcommand{\DD}{\mathbb D}
\newcommand{\jj}{\mathrm{j}}
\newcommand{\qq}{\mathrm{q}}
\newcommand{\manifold}[1]{\mathcal{#1}}
\newcommand{\M}{\manifold{M}}
\newcommand{\D}{\manifold{D}}
\newcommand{\vect}[1]{\mathrm{#1}} 
\newcommand{\x}{\vect{x}}
\newcommand{\y}{\vect{y}}
\newcommand{\va}{\vect{a}}
\newcommand{\vb}{\vect{b}}
\newcommand{\vX}{\vect{X}}
\newcommand{\vH}{\vect{H}}
\newcommand{\n}{\vect{n}}
\newtheorem{thm}{Theorem}[section]
\newtheorem{prop}[thm]{Proposition}
\theoremstyle{remark}
\newtheorem{remark}{Remark}[section]
\newtheorem{example}{Example}[section]
\theoremstyle{definition}
\newtheorem{dfn}{Definition}[section]
\newcommand{\ds}{\displaystyle}
\begin{document}

\title[Explicit Solving of the Natural PDEs System of  Minimal Lorentz Surfaces in $\mathbb R^4_2$]
{Explicit Solving of the System of Natural PDEs of Minimal Lorentz Surfaces in  $\mathbb R^4_2$}

\author{Krasimir Kanchev}
\author{Ognian Kassabov}
\author{Velichka Milousheva}

\address {Department of Mathematics and Informatics, Todor Kableshkov University of Transport,
158 Geo Milev Str., 1574, Sofia, Bulgaria}%
\email{kbkanchev@yahoo.com}%

\address{Institute of Mathematics and Informatics, Bulgarian Academy of Sciences,
Acad. G. Bonchev Str. bl. 8, 1113, Sofia, Bulgaria}
\email{okassabov@math.bas.bg}

\address{Institute of Mathematics and Informatics, Bulgarian Academy of Sciences,
Acad. G. Bonchev Str. bl. 8, 1113, Sofia, Bulgaria}
\email{vmil@math.bas.bg}

\subjclass[2010]{Primary 53B30; Secondary 53A10, 53A35}%
\keywords{Minimal Lorentz surfaces, pseudo\,-Euclidean space, canonical coordinates, natural equations, Weierstrass representation}%


\begin{abstract}
A minimal Lorentz surface in $\RR^4_2$ is said to be of general type if its corresponding null curves are non-degenerate.
These surfaces admit canonical isothermal and canonical isotropic coordinates.
It is known that the Gauss curvature $K$ and the normal curvature $\varkappa$ of such a surface 
considered as functions of the canonical coordinates satisfy a system of two natural PDEs.
Using the Weierstrass type representations of the corresponding null curves,
we solve explicitly the system of natural PDEs, expressing any solution by means of four real functions of one variable.
We obtain the transformation formulas for the functions in the Weierstrass representation of a null curve under a proper motion in $\RR^4_2$.
Using this, we find the relation between two quadruples of real functions generating one and the same solution to the system of natural PDEs.
\end{abstract}

\maketitle


\dedicatory{\textit{Dedicated to the  memory of our Teacher Prof. Georgi Ganchev (1945 - 2020)}}

\section{Introduction} \label{S:Intro}

In the present paper we study minimal Lorentz surfaces of \emph{general type} in the pseudo-Euclidean 4-space 
$\RR^4_2$ with neutral metric. These are  surfaces with  indefinite metric in $\RR^4_2$  whose mean curvature vector field  $\vH$ is zero and whose Gauss curvature  $K$  and normal curvature (curvature of the normal connection) $\varkappa$ satisfy the condition 
 $K^2-\varkappa^2\neq 0$. 

 In \cite{Sakaki2011}, Sakaki derived a system of natural PDEs for the curvatures $K$ and $\varkappa$ of this class of surfaces. 
It is proved that under the additional condition $K^2-\varkappa^2>0$ there exist exactly two one-parameter families of minimal Lorentz surfaces determined by a given solution  $(K,\varkappa)$ to the system of natural PDEs written in terms of  isothermal coordinates. 

Special isothermal coordinates called \emph{canonical} are introduced in \cite{A-M-1} for the class of minimal Lorentz surfaces satisfying $K^2-\varkappa^2>0$. It is proved that  the solution  $(K,\varkappa)$  to the system of PDEs expressed in terms of canonical coordinates determines uniquely the geometry of the surface. 

The general case $K^2-\varkappa^2\neq 0$ of minimal Lorentz surfaces in $\RR^4_2$ is studied in \cite{Kanchev2020} where these surfaces are divided into three types. Canonical coordinates are introduced for each type of surfaces and it is proved that in each case the geometry of the surface is determined uniquely by the solution $(K,\varkappa)$ to the system of natural PDEs expressed in terms of canonical coordinates. 

 A representation of a minimal Lorentz surface was given by M.P. Dussan and M. Magid in \cite{D-Mag} where they solved the Bj\"orling problem for timelike surfaces in $\RR^4_2$ constructing  a special  normal frame and a split-complex representation formula. The Bj\"orling problem for timelike surfaces in the Lorentz-Minkowski spaces $\RR^3_1$ and $\RR^4_1$ is solved in \cite{Ch-D-Mag} and  \cite{D-Fil-Mag}, respectively. Spinor representation of Lorentz surfaces in the pseudo-Euclidean 4-space with neutral metric is given in \cite{Bay-Patty}.
In \cite{Patty}, V. Patty gave a generalized Weierstrass representation of a minimal Lorentz surface in $\RR^4_2$ using spinors and Lorentz numbers (also known as para-complex, split-complex, double or hyperbolic numbers),  thus extending the Weierstrass representation of a minimal surface in $\RR^3_1$ given by J. Konderak \cite{Kond}. 

A basic instrument in the study of  minimal surfaces in the Euclidean space is the generalized Gauss map (see \cite{H-O-1}).
In \cite{Kanchev2020}, the generalized Gauss map $\Phi$ of a minimal Lorentz surface in $\RR^4_2$  considered as a holomorphic function over the algebra of the double numbers  has been studied. Another  method for studying such a surface is by 
use of  a pair of null curves in  $\RR^4_2$ \cite{Chen-1}.
In Section \ref{Can_Coord-MinLorSurf_R42}, we reveal the relation between the generalized Gauss map $\Phi$, the classification of minimal surfaces and their canonical coordinates on  one hand, and their corresponding null curves on the other. 
In Section \ref{K_kappa-IsoCurvs_R42}, we give the relation  between the  basic invariants  $K$ and $\varkappa$  of a minimal Lorentz surface and its corresponding null curves.

The next problem  that arises in the  study of the system of natural equations describing the minimal surfaces,
is the problem of finding its explicit solutions.
This problem is solved  for the classes of spacelike minimal surfaces in $\RR^4$, $\RR^4_1$, and  $\RR^4_2$, in  papers 
\cite{Kanchev2014}, \cite{Kanchev2017}, and \cite{Kanchev2019}, respectively. The main idea for obtaining explicit solutions is to use a special type of Weierstrass representation in terms of canonical coordinates, which is called \textit{canonical Weierstrass representation}.

The problem of finding explicit solutions in the case of a minimal Lorentz surface in $\RR^4_2$ is studied in \cite{K-M-1}. 
Using the canonical Weierstrass representation for the corresponding null curves, formulas of type \eqref{K_kappa-gh_MinLorSurf_R42} are obtained for the Gauss curvature $K$ and the normal curvature $\varkappa$ in the case $K^2-\varkappa^2>0$ and $\varkappa<0$.  Under the last conditions, the formulas give solutions to the considered system, expressed in terms of  four real functions of one variable.
Moreover, the following two open questions arise: \textit{Are all solutions to the system of natural PDEs obtained in the described way?}
\textit{When do two different quadruples of  real functions give one and the same solution?} 

In Sections  \ref{W-IsoCurvs_R42} and \ref{W-MinLorSurf_R42} of the present paper, the method used in \cite{K-M-1} is refined so that the corresponding formulas for the Gauss curvature  and the curvature of the normal connection are obtained in the general case $K^2-\varkappa^2\neq 0$.
The transformation formulas for the Weierstrass  representation under a proper motion  in $\RR^4_2$ are also derived.
Examples of the three types of minimal surfaces parametrized by canonical coordinates are also given.

As an application of the developed theory, in Section \ref{sect-Sol_NatEq_MinLorSurf_R42}, we obtain the general solution to the system of natural PDEs of minimal Lorentz surfaces in $\RR^4_2$.
Theorems \ref{Sol_NatEq_MinLorSurf_R42} and  \ref{Nat_eq_same_K_kappa_hatgh_gh_R42} give a comprehensive answer to the two questions raised above. 
We also give examples of solutions to the system of natural PDEs.


\vspace{0.5ex}
\setcounter{equation}{0}
\section{Preliminaries} \label{S:Prelim}

Let $\RR^4_2$ be the standard four-dimensional pseudo-Euclidean space with neutral metric. The  indefinite inner scalar product in $\RR^4_2$ is given by the formula
\begin{equation*}
\langle \va ,\vb \rangle = -a_1b_1+a_2b_2-a_3b_3+a_4b_4.
\end{equation*}
Let $\M=(\D ,\x)$ be a two-dimensional Lorentz surface in $\RR^4_2$, where $\D\subset\RR^2$, and $\x:\D \to \RR^4_2$ is an immersion. It is well known that each point
 $p \in \M$ has a neighborhood in which isothermal coordinates $(u,v)$ can be introduced  such that the first fundamental form of $\M$  is expressed as follows \cite{Anciaux-1}:
\[
\mathbf{I}=E\,(du^2-dv^2).
\]
Thus, the following formula holds true (see \cite{Anciaux-1}):
\begin{equation*}
\Delta^h \x = 2E\vH,
\end{equation*}
where $\Delta^h$ denotes the hyperbolic Laplace operator, defined by
$\Delta^h = \ds \frac{\partial^2}{\partial u^2} - \frac{\partial^2}{\partial v^2}$, and $\vH$ is the mean curvature vector field  of $\M$.

Hence, a Lorentz surface  $\M=(\D ,\x)$ parametrized by isothermal coordinates is \textit{minimal} 
 ($\vH=0$), if and only if the vector function  $\x$ is \emph{hyperbolic harmonic} ($\Delta^h \x = 0$).

Each hyperbolic harmonic function has the following form 
\begin{equation} \label{MinSurf-IsoCurves}
	\x(u,v) = \frac{\alpha_1(u+v)+\alpha_2(u-v)}{2},
\end{equation}
where $\alpha_1$ and  $\alpha_2$ are vector functions of one real variable in $\RR^4_2$  which are determined uniquely up to an additive constant. The condition on the coordinates to be isothermal is equivalent to 
\begin{equation} \label{IsoCurves-cond}
	\alpha'^2_1 = \alpha'^2_2 = 0; \qquad E = \frac{1}{2} \langle\alpha'_1 , \alpha'_2 \rangle \neq 0.
\end{equation}
Consequently, each minimal Lorentz surface in $\RR^4_2$ corresponds to a pair of null curves $(\alpha_1, \alpha_2)$ satisfying the condition 
$\langle \alpha'_1 , \alpha'_2 \rangle \neq 0$ (see \cite{Chen-1}). 
Conversely, any such pair of curves generates according to  \eqref{MinSurf-IsoCurves} a minimal Lorentz surface $\M$ parametrized by isothermal coordinates. 
The pair $(\alpha_1, \alpha_2)$ is determined uniquely by the surface  $\M$ up to numeration, parametrization and translation.

If $(t_1,t_2)$ is another pair of local coordinates of $\M$ such that 
\begin{equation} \label{isoterm-isotrop}
	t_1 = u+v\,; \qquad t_2 = u-v; \qquad  u = \frac{t_1+t_2}{2}; \qquad  v = \frac{t_1-t_2}{2},
\end{equation}
then \eqref{MinSurf-IsoCurves} implies  $\x_{t_1}=\frac{\alpha'_1}{2}$ and $\x_{t_2}=\frac{\alpha'_1}{2}$. 
Hence, it follows from \eqref{IsoCurves-cond} that  $\x_{t_1} ^2 =0$ and $\x_{t_2} ^2= 0$. 
We will call  such coordinates \textit{isotropic coordinates} of $\M$. Formulas \eqref{isoterm-isotrop}  give the  correspondence between 
isothermal and isotropic coordinates of $\M$.

\smallskip
Let $\alpha = \alpha(t)$ be a null  curve and $t=t(s)$ be a change of the parameter such that $t' \neq 0$. Then, 
\begin{equation}\label{t-s-IsoCurvs}
{\alpha''_s}^2 = {\alpha''_t}^2 {t'}^4,
\end{equation}
which implies that the condition ${\alpha''}^2\neq 0$  does not depend on the parametrization of the curve.
We will briefly call null curves with this property \textit{non-degenerate}. 
A non-degenerate null curve $\alpha $ is said to be parametrized by  a \textit{natural parameter} if  ${\alpha''}^2=\pm 1$. 
Such parameter is also known in the literature as \textit{pseudo arc-length parameter}  \cite{Vessiot1905,Duggal}, since it  plays a role
similar to the role of an arc-length parameter for non-null curves. 
It follows from \eqref{t-s-IsoCurvs} that if $t$ is an arbitrary parameter of  $\alpha$, then a natural parameter $s$ is given by 
\begin{equation}\label{NatParam_IsoCurv}
s = \int\sqrt[4]{\big|{\alpha''}^2 (t)\big|}\:dt\,.
\end{equation}

\medskip
In the study of Lorentz surfaces it is convenient to introduce the algebra of double numbers $\DD$ determined in the following way:
$\DD=\{t=u+\jj v  : \  u,v \in \RR ,\ \jj^2=1 \}$,  $\jj\notin\RR$,  $\jj$ commutes with the elements of $\RR$.
For the element  $t=u+\jj v$ of $\DD$ we have $|t|^2=t\bar t = (u+\jj v)(u-\jj v) = u^2-v^2$.
This shows that $\DD$ is the hyperbolic analogue of the algebra of complex numbers $\CC$ and reflects the Lorentz geometry of $\RR^2_1$.
The algebra of double numbers is used essentially in paper \cite{Kanchev2020} for studying the local properties of Lorentz surfaces in $\RR^4_2$.
We will follow the basic notations and definitions used in \cite{Kanchev2020}.

 In many cases, during computations with  double numbers, it is more convenient along with the basis $(1,\jj)$
 to use the null basis $(\qq,\bar\qq)$, which is determined as follows:
\begin{equation}\label{qq-def}
\qq = \frac{1-\jj}{2}; \qquad \bar\qq = \frac{1+\jj}{2}; \qquad 1 = \bar\qq + \qq; \qquad \jj = \bar\qq - \qq.
\end{equation}
It is easily  seen that the following equalities are valid:
\begin{equation*}
\qq^2 = \qq; \qquad \bar\qq^2 = \bar\qq; \qquad \qq \bar\qq = 0,
\end{equation*} 
which imply that the addition and the multiplication with respect
to the basis $(\qq,\bar\qq)$  are carried out component-wise.
This means that $\DD$, as an algebra, is isomorphic to two copies of $\RR$: \, $\DD=\RR\oplus\RR$.

\smallskip
Foundations of analysis in the algebra of double numbers $\DD$  can be found in \cite{A_F-1}, \cite{DT-K-L-1}, \cite{M-R-1}.

\medskip

Let $\M=(\D ,\x)$ be a minimal Lorentz surface parametrized by isothermal coordinates  $(u,v)$. We introduce the  $\DD^4_2$-valued vector function  $\Psi(t)$, $t=u+\jj v$ in the following way:
\begin{equation*} 
	\Psi(t) = \x(u,v)+\jj \y(u,v),
\end{equation*}
where $\y$  is a hyperbolic harmonic function conjugate to $\x$ and defined by 
\begin{equation*} 
	\y(u,v) = \frac{\alpha_1(u+v)-\alpha_2(u-v)}{2}.
\end{equation*}
Obviously,  $\Psi$ is  a holomorphic function  over $\DD$ $\left(\frac{\partial\Psi}{\partial \bar t}=0\right)$.
Its derivative  $\Phi$ is called the \textit{generalized Gauss map}. We have the following equalities: 
\begin{equation*} \label{Phi-def}
	\Phi = \Psi' = \frac{\partial\Psi}{\partial u} = \x_u +\jj\y_u = \x_u +\jj\x_v,
\end{equation*}
which imply 
\begin{equation} \label{Phi-E}
	\Phi^2 = 0;  \qquad  \|\Phi\|^2 = \Phi \bar\Phi =  \x^2_u - \x^2_v = 2E.
\end{equation}

Let $\sigma$ be the second fundamental form of $\M$. For a given vector function  $\va : \D \to \DD^4_2$ we  denote by $\va^\bot$ the orthogonal projection of $\va$ on the complexified (over $\DD$)  
normal space of  $\M$. So, we have
\begin{equation} \label{PhiPr-sigma}
	\Phi^\prime=\frac{\partial\Phi}{\partial u}=\x_{uu}+\jj\x_{uv}; \qquad
  \Phi^{\prime \bot}=\x_{uu}^\bot +\jj \x_{uv}^\bot =\sigma (\x_u,\x_u)+\jj\sigma (\x_u,\x_v).
\end{equation}
The last equalities together with  \eqref{Phi-E} imply:
\begin{equation*}
{\Phi^{\prime}}^2={\Phi^{\prime \bot}}^2=
\sigma^2(\x_u,\x_u)+\sigma^2(\x_u,\x_v)+2\jj \langle \sigma(\x_u,\x_u) ,\sigma(\x_u,\x_v)\rangle .
\end{equation*}
The expressions obtained for $\Phi$ and $\Phi'$ show that the local geometry of a minimal Lorentz surface can be described in terms of these two functions.


\vspace{0.5ex}

\setcounter{equation}{0}

\section{Two approaches for introducing canonical coordinates on a minimal Lorentz surface in  $\RR^4_2$}\label{Can_Coord-MinLorSurf_R42}

We will use the null basis  $(\qq,\bar\qq)$ of $\DD$, introduced by \eqref{qq-def}, to find the relation between the function $\Phi$ and the pair  $(\alpha_1, \alpha_2)$ of null curves, defined by \eqref{MinSurf-IsoCurves}. The ''complex'' coordinate $t\in\DD$ of $\M$ is expressed with respect to the null basis as follows:
\begin{equation*}
t = u+\jj v = (u+v)\bar\qq + (u-v)\qq = t_1\bar\qq + t_2\qq.
\end{equation*}
So, we have:
\begin{equation*}
|t|^2 = t\bar t = (t_1\bar\qq + t_2\qq)(t_1\qq + t_2\bar\qq) = t_1t_2\bar\qq + t_1t_2\qq = t_1t_2.
\end{equation*}

Analogously,  $\Psi$ is expressed as:
\begin{equation*}
\begin{array}{rll}
\Psi &=&  \ds\frac{\alpha_1(u+v)+\alpha_2(u-v)}{2} + \jj \frac{\alpha_1(u+v)-\alpha_2(u-v)}{2} \\[1.5ex]
     &=&  \alpha_1(u+v)\bar\qq + \alpha_1(u-v)\qq = \alpha_1(t_1)\bar\qq + \alpha_2(t_2)\qq.
\end{array}
\end{equation*}
In the last equalities,  $(t_1,t_2)$ are the isotropic coordinates of $\M$, determined by \eqref{isoterm-isotrop}.

After differentiation we obtain the following formulas for  $\Phi$ and $\Phi'$:
\begin{equation}\label{Phi-qq}
\Phi = \alpha'_1\bar\qq + \alpha'_2\qq;  \qquad \|\Phi\|^2 = \langle\Phi , \bar\Phi \rangle = \langle\alpha'_1 , \alpha'_2\rangle.  
\end{equation}
\begin{equation}\label{Phi_p-qq}
\Phi' = \alpha''_1\bar\qq + \alpha''_2\qq;  \qquad {\Phi'}^2 = {\alpha''_1}^2\bar\qq + {\alpha''_2}^2\qq; \qquad
\big|{\Phi'}^2\big|^2 = {\alpha''_1}^2{\alpha''_2}^2.
\end{equation}

\medskip

In  \cite{Kanchev2020}, the class of minimal Lorentz surfaces of general type for which ${\Phi'}^2$ is an invertible element of $\DD$ is considered.
It follows from  \eqref{Phi_p-qq} that the last condition  is equivalent to ${\alpha''_1}^2\neq 0$ and ${\alpha''_2}^2\neq 0$.
So, we can  give the following equivalent definition of surfaces of general type. 
\begin{dfn}\label{Def-Gen_Typ-IsoCurvs}
A minimal Lorentz surface $\M$ in  $\RR^4_2$ is said to be of \textit{general type} if its corresponding null curves $\alpha_1$ and $\alpha_2$ are non-degenerate.
\end{dfn}

As we mentioned before, the property of a null curve to be non-degenerate does not depend on the parametrization. Obviously, it is invariant under motions in  $\RR^4_2$.
Consequently, the property of a minimal surface to be of general type is a geometric one: it is independent of the local parametrization and is invariant under motions in  $\RR^4_2$.

\medskip
The minimal Lorentz surfaces of general type are classified in \cite{Kanchev2020} on the base of the generalized Gauss map $\Phi$. They are  divided into three different subclasses depending on the 
 quadrant with respect to the null basis of $\DD$, where the value of ${\Phi'^\bot}^2={\Phi'}^2$ lies. 
The second formula of \eqref{Phi_p-qq} shows that the quadrant is determined by the signs of  ${\alpha''_1}^2$ and ${\alpha''_2}^2$.
Thus, we can  define the different subclasses of minimal surfaces also as follows:

\begin{dfn}\label{Def-MinSurf_kind123-IsoCurvs}
 Let $\M$ be a minimal Lorentz surface of general type in $\RR^4_2$ and  $\alpha_1$, $\alpha_2$ be its corresponding null curves. 
The surface $\M$ is said to be:
\begin{itemize}
	\item of \emph{first type}, if ${\alpha''_1}$ and ${\alpha''_2}$ are both spacelike;
	\item of \emph{second type}, if ${\alpha''_1}$ and ${\alpha''_2}$ are both timelike;
	\item  of \emph{third type}, if one of the vectors ${\alpha''_1}$ or ${\alpha''_2}$ is spacelike and the other one is timelike.
	 \end{itemize}
\end{dfn}

It follows from \eqref{t-s-IsoCurvs} that for an arbitrary null curve $\alpha$ the casual character  of $\alpha''$ does not depend on the parametrization of the curve. 
Obviously, the character  does not  change under motions in $\RR^4_2$. Hence, the classification of the minimal Lorentz surfaces, given by  Definition \ref{Def-MinSurf_kind123-IsoCurvs}, is geometric: it does not depend on the parametrization and is invariant under motions in $\RR^4_2$.

Note that the surfaces of third type can not be divided into two separate subclasses depending on  whether ${\alpha''_1}^2>0$, ${\alpha''_2}^2<0$ or ${\alpha''_1}^2<0$, ${\alpha''_2}^2>0$, since 
one of the cases is transformed  to the other one by a simultaneous re-numeration of the curves and the parameters. 
So, without loss of generality we can assume that ${\alpha''_1}^2>0$ and ${\alpha''_2}^2<0$.

In \cite{Kanchev2020}, special isothermal coordinates are introduced such that ${\Phi'^\bot}^2={\Phi'}^2$ is equal to  $1$, $-1$, or $\jj$, depending on the type of the surface. 

\begin{dfn}\label{Can_Coord-MinLorSurf_R42-Phi}
Let $\M$ be a  minimal Lorentz surface of general type in $\RR^4_2$ parametrized by  isothermal coordinates $(u,v)$. The coordinates $(u,v)$ are said to be \emph{canonical}, if the function $\Phi$ satisfies the condition: 
\begin{equation}\label{Can_Phi_R42}
{\Phi'}^2=\varepsilon,
\end{equation}
where $\varepsilon=1$ for surfaces of first type, $\varepsilon=-1$ for surfaces of second type, and 
$\varepsilon=\jj$ for surfaces of  third type.
\end{dfn}

It follows from  \eqref{qq-def} that $1 = \bar\qq + \qq$;\; $-1 = -\bar\qq - \qq$;\; $\jj = \bar\qq - \qq$.
Then, \eqref{Phi_p-qq} implies that condition  \eqref{Can_Phi_R42} is equivalent to ${\alpha''_1}^2=\pm 1$ and ${\alpha''_2}^2=\pm 1$.
The last  observations give us the idea to  introduce the concept of isotropic canonical coordinates.

\begin{dfn}\label{Can_Coord-MinLorSurf_R42-IsoCurvs}
Let $\M$ be a  minimal Lorentz surface of general type in $\RR^4_2$ with isotropic coordinates  $(t_1,t_2)$. The coordinates  $(t_1,t_2)$ are said to be \emph{canonical}, if
 $t_1$ and $t_2$ are natural parameters of the corresponding  null curves $\alpha_1$ and  $\alpha_2$, respectively.  
In the case of surfaces of third type, we assume that  ${\alpha''_1}^2=1$ and ${\alpha''_2}^2=-1$.
\end{dfn}

Thus, we can formulate the following statement.

\begin{prop}\label{Prop-CanCoord_Equiv}
Let $\M$ be a  minimal Lorentz surface of general type in $\RR^4_2$ with isothermal coordinates $(u,v)$. Then,  $(u,v)$ are canonical coordinates of $\M$  if and only if the corresponding isotropic coordinates  $(t_1,t_2)$ are canonical.
\end{prop}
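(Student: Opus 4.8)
The plan is to prove the equivalence by unwinding Definitions~\ref{Can_Coord-MinLorSurf_R42-Phi} and~\ref{Can_Coord-MinLorSurf_R42-IsoCurvs} and checking that both amount to the same pair of normalizations of ${\alpha''_1}^2$ and ${\alpha''_2}^2$; the formula \eqref{Phi_p-qq}, which expresses $\Phi'$ and ${\Phi'}^2$ in the null basis $(\qq,\bar\qq)$, is the dictionary that translates one side into the other. Throughout, $(t_1,t_2)=(u+v,\,u-v)$ are the isotropic coordinates corresponding to $(u,v)$ via \eqref{isoterm-isotrop}, and by \eqref{MinSurf-IsoCurves} they are precisely the parameters in which the null curves $\alpha_1$ and $\alpha_2$ enter the surface; since $\M$ is of general type, both curves are non-degenerate, so ${\alpha''_1}^2\neq 0$, ${\alpha''_2}^2\neq 0$ and all the notions involved make sense.

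First I would observe that, by \eqref{Phi_p-qq}, ${\Phi'}^2={\alpha''_1}^2\bar\qq+{\alpha''_2}^2\qq$, while from \eqref{qq-def} one has $1=\bar\qq+\qq$, $-1=-\bar\qq-\qq$ and $\jj=\bar\qq-\qq$. Because $\DD=\RR\oplus\RR$ with respect to $(\qq,\bar\qq)$, two double numbers agree exactly when their $\bar\qq$- and $\qq$-components agree. Hence the canonicity condition \eqref{Can_Phi_R42}, namely ${\Phi'}^2=\varepsilon$, is equivalent to the pair of scalar equalities ${\alpha''_1}^2=1$, ${\alpha''_2}^2=1$ when $\M$ is of first type ($\varepsilon=1$); to ${\alpha''_1}^2=-1$, ${\alpha''_2}^2=-1$ when $\M$ is of second type ($\varepsilon=-1$); and to ${\alpha''_1}^2=1$, ${\alpha''_2}^2=-1$ when $\M$ is of third type ($\varepsilon=\jj$).

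Next I would compare this with Definition~\ref{Can_Coord-MinLorSurf_R42-IsoCurvs}. Recalling from Section~\ref{S:Prelim} that a non-degenerate null curve is parametrized by a natural parameter exactly when the square of its second derivative equals $\pm 1$, the requirement that $(t_1,t_2)$ be canonical isotropic reads ${\alpha''_1}^2=\pm 1$ and ${\alpha''_2}^2=\pm 1$, with the normalization ${\alpha''_1}^2=1$, ${\alpha''_2}^2=-1$ imposed in the third-type case. It remains to check that the admissible sign patterns coincide with those found above. For a surface of first type, $\alpha''_1$ and $\alpha''_2$ are spacelike by Definition~\ref{Def-MinSurf_kind123-IsoCurvs}, so ${\alpha''_i}^2>0$ and the $\pm 1$ normalization can only be $+1$; for a surface of second type both are timelike, forcing $-1$; and for a surface of third type the convention ${\alpha''_1}^2>0$, ${\alpha''_2}^2<0$ fixed just after Definition~\ref{Def-MinSurf_kind123-IsoCurvs}, together with the normalization in Definition~\ref{Can_Coord-MinLorSurf_R42-IsoCurvs}, yields precisely ${\alpha''_1}^2=1$, ${\alpha''_2}^2=-1$. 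Thus ``$(t_1,t_2)$ canonical'' unwinds to the same system of equalities as ``$(u,v)$ canonical'', and the two statements are equivalent.

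I do not expect a real obstacle here; the only point that needs a moment's care is the logical status of the type of $\M$ (hence of $\varepsilon$): by \eqref{t-s-IsoCurvs} the casual characters of $\alpha''_1$ and $\alpha''_2$, and with them the type of the surface, are independent of the parametrization, so ``the $\varepsilon$ of $\M$'' is well defined before any choice of coordinates and no circularity arises. The substance of the proposition is simply that the two a priori different normalizations --- one phrased through ${\Phi'}^2$, the other through natural parameters of the null curves --- are one and the same condition once everything is written in the null basis.
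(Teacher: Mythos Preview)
Your proposal is correct and follows precisely the approach the paper intends: the paper does not give a formal proof of this proposition but treats it as an immediate consequence of the discussion just before Definition~\ref{Can_Coord-MinLorSurf_R42-IsoCurvs}, where it is observed via \eqref{qq-def} and \eqref{Phi_p-qq} that ${\Phi'}^2=\varepsilon$ is equivalent to ${\alpha''_1}^2=\pm 1$, ${\alpha''_2}^2=\pm 1$ with the signs dictated by the type. Your write-up simply makes this explicit, with the added (and appropriate) care of matching the sign patterns to the three types via Definition~\ref{Def-MinSurf_kind123-IsoCurvs}.
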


Formula  \eqref{NatParam_IsoCurv} gives a natural parameter of an arbitrary non-degenerate null curve. So, we have a new proof of the existence of canonical coordinates for each minimal Lorentz surface of general type in $\RR^4_2$, and also an explicit formula for obtaining canonical coordinates.

If $t$ and  $s$ are natural parameters of one and the same null curve, then \eqref{t-s-IsoCurvs} implies that  $t'(s)=\pm 1$. Hence, the natural parameters of a null curve are related by the equality
\begin{equation*}
t=\pm s + c,
\end{equation*}
where  $c$ is a constant.

Hence, the canonical coordinates of a given minimal Lorentz surface of general type are determined uniquely up to a numeration, a sign, and an additive constant. In the case of a minimal surface of  third type, the numeration is also fixed, since  ${\alpha''_1}^2=1$ and ${\alpha''_2}^2=-1$.

\setcounter{equation}{0}

\section{Basic invariants of a minimal Lorentz surface in $\RR^4_2$ and its corresponding pair of null curves}\label{K_kappa-IsoCurvs_R42}

The basic invariants of a minimal Lorentz surface $\M$ in $\RR^4_2$ are the Gauss curvature $K$ and the curvature of the normal connection $\varkappa$.
Let $\vX_1$, $\vX_2$ be an orthonormal tangent frame field and $\n_1$, $\n_2$ -- an orthonormal normal frame field  $\M$. Without loss of generality we assume that  $\vX_1^2=\n_1^2=-\vX_2^2=-\n_2^2=\pm 1$ and for each point  $p\in\M$ the  quadruple $(\vX_1,\vX_2,\n_1,\n_2)$ is a  right oriented orthonormal frame field in $\RR^4_2$.
Then, the Gauss curvature $K$ and the curvature of the normal connection $\varkappa$ are defined by
\begin{equation*}
K = -\langle R(\vX_1,\vX_2)\vX_2, \vX_1 \rangle;  \qquad  \varkappa = \langle R^N(\vX_1,\vX_2)\n_2 , \n_1 \rangle ,
\end{equation*}
where $R$ and  $R^N$ are the curvature tensor and the normal curvature tensor, respectively. 
The Gauss curvature is expressed in terms of the second fundamental form $\sigma$  as follows:
\begin{equation*}
K = -\langle\sigma(\vX_1,\vX_1) , \sigma(\vX_2,\vX_2)\rangle + \sigma^2(\vX_1,\vX_2) = -\sigma^2(\vX_1,\vX_1) + \sigma^2(\vX_1,\vX_2).
\end{equation*}
By virtue of the Ricci equation, the curvature of the normal connection $\varkappa$  satisfies
\begin{equation*}
\varkappa = \langle [A_{\n_2},A_{\n_1}] \vX_1 , \vX_2 \rangle =
\langle A_{\n_1}\vX_1 , A_{\n_2}\vX_2 \rangle - \langle A_{\n_2}\vX_1 , A_{\n_1}\vX_2 \rangle .
\end{equation*}
Now, having in mind the last equalities and formulas \eqref{Phi-E}, \eqref{PhiPr-sigma}  we obtain the following relations between the functions $K$, $\varkappa$, and $\Phi$ (see \cite{Kanchev2020}):
\begin{equation}\label{K_kappa_Phi_R42-tl}
K = \ds\frac{-4\|\Phi\wedge\Phi'\|^2}{\|\Phi\|^6}; \quad
\varkappa = \frac{-4\,\det \big(\Phi\,,\bar\Phi\,,\Phi^\prime\,,\overline{\Phi^\prime}\,\big)}{\|\Phi\|^6}; \quad
\big|{\Phi^{\prime}}^2\big|^2 = E^4(K^2-\varkappa^2).
\end{equation}
Then,  \eqref{Phi-qq} and \eqref{Phi_p-qq} imply
\[
\|\Phi\wedge\Phi'\|^2 = \langle \alpha'_1\wedge\alpha''_1 , \alpha'_2\wedge\alpha''_2 \rangle ; \qquad  
\det \big(\Phi\,,\bar\Phi\,,\Phi^\prime\,,\overline{\Phi^\prime}\,\big) = \det \big(\alpha'_1\,, \alpha'_2\,, \alpha''_1\,, \alpha''_2\, \big).
\]
 Substituting the last expressions in \eqref{K_kappa_Phi_R42-tl}, we get that $K$ and $\varkappa$ are expressed in terms of the null curves as follows:
\begin{equation}\label{K_kappa_IsoCurvs_R42}
K         = \ds\frac{-4\,\langle\alpha'_1\wedge\alpha''_1 , \alpha'_2\wedge\alpha''_2 \rangle}{\langle\alpha'_1 , \alpha'_2\rangle^3}; \qquad
\varkappa = \ds\frac{-4\,\det (\alpha'_1\,, \alpha'_2\,, \alpha''_1\,, \alpha''_2\, )}{\langle\alpha'_1 , \alpha'_2\rangle^3}.
\end{equation}

The third equality in \eqref{K_kappa_Phi_R42-tl}, together with  \eqref{IsoCurves-cond} and \eqref{Phi_p-qq} imply:
\begin{equation*}
K^2-\varkappa^2 = \ds\frac{16\,{\alpha''_1}^2{\alpha''_2}^2}{\langle\alpha'_1 , \alpha'_2\rangle^4}.
\end{equation*}
So, we obtain the following statement.

\begin{prop}\label{DegP_kind123-K_kappa-tl} \cite{Kanchev2020}
 A minimal Lorentz surface  $\M$ in  $\RR^4_2$ is of general type if and only if $K^2-\varkappa^2\neq 0$.
Moreover,
\begin{enumerate}
	\item The surface  $\M$ is of first or second type if and only if  $K^2-\varkappa^2>0$\,. 
	\item The surface  $\M$ is of third type if and only if  $K^2-\varkappa^2<0$\,. 
\end{enumerate}
\end{prop}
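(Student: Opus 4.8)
The plan is to derive the stated characterizations directly from the displayed formula
\[
K^2-\varkappa^2 = \frac{16\,{\alpha''_1}^2{\alpha''_2}^2}{\langle\alpha'_1 , \alpha'_2\rangle^4},
\]
which is already established in the excerpt from the third identity of \eqref{K_kappa_Phi_R42-tl} together with \eqref{IsoCurves-cond} and \eqref{Phi_p-qq}. The denominator $\langle\alpha'_1,\alpha'_2\rangle^4$ is strictly positive everywhere, because isothermality \eqref{IsoCurves-cond} forces $E=\tfrac12\langle\alpha'_1,\alpha'_2\rangle\neq0$; hence the sign of $K^2-\varkappa^2$ coincides pointwise with the sign of the product ${\alpha''_1}^2{\alpha''_2}^2$.

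First I would observe that $\M$ is of general type precisely when both $\alpha_1$ and $\alpha_2$ are non-degenerate null curves (Definition \ref{Def-Gen_Typ-IsoCurvs}), i.e.\ when ${\alpha''_1}^2\neq0$ and ${\alpha''_2}^2\neq0$ at every point; this is exactly the condition ${\alpha''_1}^2{\alpha''_2}^2\neq0$, which by the boxed formula is equivalent to $K^2-\varkappa^2\neq0$. For the refined dichotomy, I would invoke Definition \ref{Def-MinSurf_kind123-IsoCurvs}: for a surface of general type the product ${\alpha''_1}^2{\alpha''_2}^2$ is positive exactly when ${\alpha''_1}^2$ and ${\alpha''_2}^2$ have the same sign, which is the first-type case (both $\alpha''_i$ spacelike, ${\alpha''_i}^2>0$) or the second-type case (both timelike, ${\alpha''_i}^2<0$); and it is negative exactly when the signs differ, which is the third-type case. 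Combining with the sign of the denominator yields statements (1) and (2).

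The only point that needs a word of care — and the closest thing to an obstacle — is the passage from "pointwise on a connected domain" to a clean global statement: a priori the casual characters of $\alpha''_1$ and $\alpha''_2$, and hence the sign of $K^2-\varkappa^2$, could vary from point to point. However, for a surface of general type neither ${\alpha''_1}^2$ nor ${\alpha''_2}^2$ vanishes, so by continuity each keeps a constant sign on each connected component of the domain; this is implicitly how the three types in Definition \ref{Def-MinSurf_kind123-IsoCurvs} are well defined, and it is also the content of the remark following that definition that the casual character is invariant under reparametrization and motions. Granting this, the equivalences are immediate and the proof is essentially a one-line reading of the boxed identity together with the two definitions, so I would present it compactly without further computation.
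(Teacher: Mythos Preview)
Your proposal is correct and follows essentially the same route as the paper: the paper derives the identity $K^2-\varkappa^2 = 16\,{\alpha''_1}^2{\alpha''_2}^2/\langle\alpha'_1,\alpha'_2\rangle^4$ just before the proposition and then simply says ``So, we obtain the following statement,'' leaving the sign reading via Definitions \ref{Def-Gen_Typ-IsoCurvs} and \ref{Def-MinSurf_kind123-IsoCurvs} implicit, which is exactly what you spell out. Your extra paragraph on continuity and constancy of the casual characters on connected components is a reasonable clarification but is not needed beyond what the paper already remarks after Definition \ref{Def-MinSurf_kind123-IsoCurvs}.
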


The invariants  $K$ and $\varkappa$ of a minimal Lorentz surface of general type expressed in terms of the canonical isothermal coordinates give  a solution to the following system of PDEs \cite{Kanchev2020}: 
\begin{equation}\label{Nat_Eq_K_kappa_MinLorSurf_R42}
\begin{array}{lll}
\sqrt[4]{\big|K^2-\varkappa^2\big|}\; \Delta^h\ln \big|K^2-\varkappa^2\big| &=& \delta 8K\,; \\[1.5ex]
\sqrt[4]{\big|K^2-\varkappa^2\big|}\; \Delta^h\ln \left|\ds\frac{\vphantom{\mu^2}K+\varkappa}{K-\varkappa}\right|&=& \delta 4\varkappa\;;
\end{array}  \qquad\quad K^2-\varkappa^2\neq 0 \,,
\end{equation}
where $\delta=+1$ in the case $E>0$, $\delta=-1$ in the case $E<0$. 
We call \eqref{Nat_Eq_K_kappa_MinLorSurf_R42} the \textit{system of natural equations} of the minimal Lorentz surfaces in $\RR^4_2$.
In \cite{Kanchev2020}, it is shown that every solution to \eqref{Nat_Eq_K_kappa_MinLorSurf_R42} is obtained in this way.
The results obtained in \cite{Kanchev2020} can be summarized in the following theorem.

\begin{thm}\label{Thm-Nat_Eq_K_kappa_R42-tl}
Let  $\M=(\D ,\x)$ be a minimal Lorentz surface of general type in $\RR^4_2$ parametrized by canonical isothermal coordinates. 
Then, the Gauss curvature  $K$ and the curvature of the normal connection $\varkappa$ of $\M$, expressed in terms of the canonical coordinates, give a solution to the system of natural equations \eqref{Nat_Eq_K_kappa_MinLorSurf_R42}  of the minimal Lorentz surfaces in $\RR^4_2$.
If  $\hat\M$ is obtained from $\M$ by a proper motion in  $\RR^4_2$, then $\hat\M$ generates
the same solution to  system  \eqref{Nat_Eq_K_kappa_MinLorSurf_R42}. 

Conversely, let  $K$ and  $\varkappa$ be a pair of functions defined in a domain $\D\subset\RR^2$ and giving a solution to the system of natural equations \eqref{Nat_Eq_K_kappa_MinLorSurf_R42}. Then, 

(i) in the case $K^2-\varkappa^2>0$, in a neighborhood of any point $(u_0,v_0)\in\D$ there exist a unique (up to a proper
motion) minimal Lorentz  surface of first type and a unique (up to a proper motion) minimal Lorentz  surface of second type, both parametrized by canonical coordinates, for which the  functions $K$ and $\varkappa$ are the Gauss curvature
and the curvature of the normal connection, respectively. 

 (ii) in the case $K^2-\varkappa^2<0$, in a neighborhood of any point $(u_0,v_0)\in\D$ there exists a unique (up to a proper
motion) minimal Lorentz  surface of third type, parametrized by canonical coordinates, for which the functions $K$ and $\varkappa$ are the Gauss curvature and the curvature of the normal connection, respectively. 
\end{thm}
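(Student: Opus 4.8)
I would prove the two directions separately, the converse being the substantial one.

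\emph{The direct statement.} By Propositions~\ref{Prop-CanCoord_Equiv} and~\ref{DegP_kind123-K_kappa-tl} I would work in the null-curve picture of Section~\ref{Can_Coord-MinLorSurf_R42}: canonical isothermal coordinates $(u,v)$ correspond to isotropic coordinates $(t_1,t_2)=(u+v,u-v)$ in which $\alpha_1,\alpha_2$ carry a natural parameter, so ${\alpha''_1}^2=\pm1$, ${\alpha''_2}^2=\pm1$, equivalently ${\Phi'}^2=\varepsilon$ as in \eqref{Can_Phi_R42}, while $2E=\langle\alpha'_1,\alpha'_2\rangle\neq0$ has a constant sign $\delta$. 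The first equation of \eqref{Nat_Eq_K_kappa_MinLorSurf_R42} I would obtain from the Gauss equation: for the first fundamental form $E\,(du^2-dv^2)$ one has $K=-\tfrac{1}{2E}\Delta^h\ln|E|$, whereas the third identity of \eqref{K_kappa_Phi_R42-tl} together with ${\Phi'}^2=\varepsilon$ forces $|E|=|K^2-\varkappa^2|^{-1/4}$, that is $E=\delta\,|K^2-\varkappa^2|^{-1/4}$, and combining the two gives
\[
\sqrt[4]{\big|K^2-\varkappa^2\big|}\;\Delta^h\ln\big|K^2-\varkappa^2\big|=8\delta K .
\]
For the second equation I would pass to the null basis $(\qq,\bar\qq)$ of $\DD$, where by \eqref{Phi-qq}--\eqref{Phi_p-qq} the contributions of $\alpha_1$ and $\alpha_2$ decouple, write the Frenet-type structure equations of the two non-degenerate null curves in their natural parameters and, using the Codazzi and Ricci equations of $\M$, insert them into a direct computation of $\Delta^h\ln\big|(K+\varkappa)/(K-\varkappa)\big|$; the canonical normalization then yields the second PDE of \eqref{Nat_Eq_K_kappa_MinLorSurf_R42}. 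Equivalently, the whole step can be packaged as one $\DD$-valued identity whose $\bar\qq$- and $\qq$-components are the two scalar equations.

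\emph{Invariance under a proper motion.} Writing a proper motion of $\RR^4_2$ as $\x\mapsto A\x+\vb$, the matrix $A$ is a linear isometry with $\det A=1$ lying in the identity component of $O(2,2)$, and it acts on the $\DD^4_2$-valued functions attached to $\M$ by the same constant matrix, so $\Phi\mapsto A\Phi$ and $\Phi'\mapsto A\Phi'$. Since $A$ preserves $\langle\,\cdot\,,\cdot\,\rangle$ and $\det A=1$, every quantity occurring in \eqref{K_kappa_Phi_R42-tl} and \eqref{K_kappa_IsoCurvs_R42} is unchanged; in particular ${\Phi'}^2$ is unchanged, so $\hat\M$ is again parametrized by canonical coordinates and $K,\varkappa$ are literally the same functions of $(u,v)$. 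Hence $\hat\M$ generates the same solution of \eqref{Nat_Eq_K_kappa_MinLorSurf_R42}.

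\emph{The converse.} This is a Bonnet-type existence and uniqueness result, and is the heart of the matter. Given $K,\varkappa$ solving \eqref{Nat_Eq_K_kappa_MinLorSurf_R42} with the appropriate sign of $K^2-\varkappa^2$, I would fix the type by choosing $\varepsilon=1$ or $\varepsilon=-1$ when $K^2-\varkappa^2>0$ and $\varepsilon=\jj$ when $K^2-\varkappa^2<0$, and set $E:=\delta\,|K^2-\varkappa^2|^{-1/4}$ with $\delta$ the sign appearing in \eqref{Nat_Eq_K_kappa_MinLorSurf_R42}; this is the only choice of $E$ compatible with the third identity of \eqref{K_kappa_Phi_R42-tl} and the canonical condition ${\Phi'}^2=\varepsilon$. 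On a neighbourhood of $(u_0,v_0)\in\D$ I would then assemble candidate data for an isometric immersion: the metric $E\,(du^2-dv^2)$, a rank-two normal bundle carrying an indefinite fibre metric, a symmetric vector-valued form $\sigma$, and a normal connection. Imposing $\trace\sigma=0$ (minimality), the normalization ${\Phi'}^2=\varepsilon$, and the requirement that the Gauss and normal curvatures be the given $K$ and $\varkappa$ pins down $\sigma$ and the normal connection form in terms of $K,\varkappa,E$ up to a rotation of the normal frame. A computation then shows that the Gauss--Codazzi--Ricci equations for this data reduce, after $E=\delta|K^2-\varkappa^2|^{-1/4}$ is substituted, exactly to the system \eqref{Nat_Eq_K_kappa_MinLorSurf_R42}: the Gauss equation becomes the relation already used to define $E$, and the remaining content of Codazzi and Ricci is precisely the two PDEs. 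Therefore the fundamental theorem for submanifolds of a pseudo-Euclidean space applies and produces an immersion $\x$ realizing this data, unique up to a motion of $\RR^4_2$; choosing the adapted frame at $(u_0,v_0)$ to be right-oriented makes this motion proper. By construction $\vH=0$, the coordinates are canonical, and $K,\varkappa$ are the Gauss and normal curvatures of $\x$; the choices $\varepsilon=1$ and $\varepsilon=-1$ give the two surfaces of (i), and $\varepsilon=\jj$ gives the surface of (ii).

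\emph{Main obstacle.} The routine-but-delicate step is verifying that, once minimality and the canonical normalization are imposed, the Gauss--Codazzi--Ricci system collapses to \emph{exactly} the two equations \eqref{Nat_Eq_K_kappa_MinLorSurf_R42} and to no further condition; organizing this through the $\DD$-valued objects $\Phi$ and ${\Phi'}^2$ in the null basis $(\qq,\bar\qq)$, where \eqref{Phi-qq}--\eqref{Phi_p-qq} split off the contributions of the two null curves, is what keeps the computation manageable. One must also keep track of the causal characters of $\alpha''_1$ and $\alpha''_2$ --- this is what separates the first- and second-type cases and what guarantees that the surviving ambiguity is a \emph{proper} motion rather than an arbitrary one --- and the statement is necessarily local, since the adapted frame and the normalization $|E|=|K^2-\varkappa^2|^{-1/4}$ are controlled only near $(u_0,v_0)$.
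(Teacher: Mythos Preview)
The paper does not prove this theorem at all: it is stated as a summary of results already obtained in the cited reference \cite{Kanchev2020} (see the sentence immediately preceding the theorem, ``The results obtained in \cite{Kanchev2020} can be summarized in the following theorem''), and no proof is given here. So there is no ``paper's own proof'' against which to compare your proposal.

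That said, your outline is the standard and correct strategy for a Bonnet-type result of this kind, and it is essentially the approach one would expect in \cite{Kanchev2020}. The direct part via the Gauss equation $K=-\tfrac{1}{2E}\Delta^h\ln|E|$ combined with $|E|=|K^2-\varkappa^2|^{-1/4}$ (from the third identity of \eqref{K_kappa_Phi_R42-tl} under the canonical normalization) is exactly right for the first PDE; deriving the second PDE from Codazzi--Ricci is the expected route. For the converse, your plan to prescribe $E$, assemble minimal second-fundamental-form data with the canonical normalization, and verify that Gauss--Codazzi--Ricci reduce precisely to \eqref{Nat_Eq_K_kappa_MinLorSurf_R42} so that the fundamental theorem of submanifolds applies is again the standard argument. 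One small caution: ``proper motion'' in this paper means $\det A=1$ (i.e.\ $A\in\mathbf{SO}(2,2,\RR)$), not necessarily the identity component $\mathbf{SO}^{+}(2,2,\RR)$; your invariance argument should allow for both connected components, and in the uniqueness statement you should check that the ambiguity in the fundamental theorem (which a priori is an arbitrary isometry) can indeed be cut down to a \emph{proper} one by tracking orientations of the tangent and normal planes separately.
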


\setcounter{equation}{0}

\section{Weierstrass type representation of a null curve in $\RR^4_2$}\label{W-IsoCurvs_R42}

Let  $\alpha$ be a null curve in $\RR^4_2$. 
We will show that the nullity condition ${\alpha'}^2=0$ of the curve can be expressed in terms of three real functions in such a way that the components of $\alpha'$  to depend in a polynomial way on these functions. 
\begin{prop}\label{prop-W_IsoCurv_R42}
Let  $\alpha$ be a null curve in $\RR^4_2$ and the components of $\alpha'=(\xi_1,\xi_2,\xi_3,\xi_4)$ satisfy the condition $\xi_1-\xi_2\neq 0$. Then, $\alpha'$ can be presented in the following way: 
\begin{equation}\label{W_alphap_R42}
\alpha' = f \big(g h+1, g h-1, h - g, h + g\big),
\end{equation}
where $f\neq 0$, $g$ and $h$ are three smooth real functions determined uniquely by the curve $\alpha$ as follows:
\begin{equation}\label{fgh_alphap_R42}
f=\ds\frac{1}{2}(\xi_1-\xi_2); \qquad
g=\ds\frac{\xi_4-\xi_3}{\xi_1-\xi_2}; \qquad
h=\ds\frac{\xi_4+\xi_3}{\xi_1-\xi_2}.
\end{equation}

Conversely, if  $(f,g,h)$ are three smooth real functions such that $f\neq 0$, then there exists a null curve $\alpha$ in  $\RR^4_2$ such that 
$\alpha'$ is presented by the given functions according to  \eqref{W_alphap_R42}  and the condition $\xi_1-\xi_2\neq 0$ is satisfied. 
\end{prop}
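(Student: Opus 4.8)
The plan is to verify the proposition by direct computation in both directions, treating the forward direction as essentially a bookkeeping exercise and the converse as an integration problem.

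For the forward direction, I would start from a null curve $\alpha$ whose derivative $\alpha'=(\xi_1,\xi_2,\xi_3,\xi_4)$ satisfies $\xi_1-\xi_2\neq 0$, and simply \emph{define} $f$, $g$, $h$ by the formulas in \eqref{fgh_alphap_R42}. Then I would substitute these back into the right-hand side of \eqref{W_alphap_R42} and check component by component that we recover $(\xi_1,\xi_2,\xi_3,\xi_4)$: the first two components give $f(gh\pm 1)=\tfrac12(\xi_1-\xi_2)(gh\pm1)$, and since by the nullity condition ${\alpha'}^2=-\xi_1^2+\xi_2^2-\xi_3^2+\xi_4^2=0$ we have $\xi_4^2-\xi_3^2=\xi_1^2-\xi_2^2$, hence $(\xi_4-\xi_3)(\xi_4+\xi_3)=(\xi_1-\xi_2)(\xi_1+\xi_2)$, which yields $gh=\tfrac{\xi_1+\xi_2}{\xi_1-\xi_2}$; from this $f(gh+1)=\xi_1$ and $f(gh-1)=\xi_2$ follow immediately. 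The last two components give $f(h-g)=\tfrac12(\xi_1-\xi_2)\cdot\tfrac{2\xi_3}{\xi_1-\xi_2}=\xi_3$ and $f(h+g)=\xi_4$ just as directly. Smoothness of $f,g,h$ is clear from $\xi_1-\xi_2\neq 0$, and $f\neq 0$ is exactly that condition; uniqueness follows because \eqref{W_alphap_R42} forces $2f = (gh+1)f - (gh-1)f = \xi_1-\xi_2$, then $g$ and $h$ are recovered by solving the linear system in the last two components.

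For the converse, given smooth real functions $(f,g,h)$ with $f\neq 0$, I would define the vector function $\beta := f(gh+1,\,gh-1,\,h-g,\,h+g)$ and check that $\langle\beta,\beta\rangle = 0$: indeed $-f^2(gh+1)^2 + f^2(gh-1)^2 - f^2(h-g)^2 + f^2(h+g)^2 = f^2\big(-4gh + 4gh\big) = 0$, using $(gh+1)^2-(gh-1)^2 = 4gh$ and $(h+g)^2-(h-g)^2 = 4gh$. Then I would set $\alpha(t) := \int_{t_0}^{t}\beta(\tau)\,d\tau$ (componentwise), which is a smooth curve with $\alpha' = \beta$, hence ${\alpha'}^2 = 0$, so $\alpha$ is a null curve; and $\xi_1-\xi_2 = f(gh+1)-f(gh-1) = 2f \neq 0$ as required.

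I do not expect any genuine obstacle here: the statement is an algebraic parametrization of the null cone in $\RR^4_2$ (a hyperbolic analogue of the classical rational parametrization underlying the Weierstrass representation), and the only thing to be careful about is keeping the signs of the neutral metric straight and making sure the nullity identity $\xi_4^2-\xi_3^2 = \xi_1^2-\xi_2^2$ is used at the right point in the forward direction. If I wanted to present the parametrization more conceptually, I would note that $(gh+1,gh-1,h-g,h+g)$ is, up to the scalar $f$, the image of $(g,h)\in\RR^2$ under a fixed quadratic map onto the (projectivized) null cone minus the hyperplane $\xi_1=\xi_2$, and the condition $\xi_1-\xi_2\neq 0$ is precisely what excludes that hyperplane — but for the proof the straightforward substitution is cleanest.
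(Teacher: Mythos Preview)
Your proposal is correct and follows essentially the same approach as the paper's proof: both directions are handled by direct computation, defining $f,g,h$ via \eqref{fgh_alphap_R42} and checking \eqref{W_alphap_R42} componentwise using the nullity identity, and conversely verifying $\va^2=0$ for $\va=(gh+1,gh-1,h-g,h+g)$. Your write-up is in fact more explicit than the paper's (which simply says ``by direct computations''), particularly in spelling out how the null condition yields $gh=(\xi_1+\xi_2)/(\xi_1-\xi_2)$ and in making the integration step for the converse explicit.
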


\begin{proof}
 Let $(f,g,h)$ be three smooth real functions defined in an interval of  $\RR$ and  $f\neq 0$.
We consider the curve $\alpha$ defined by \eqref{W_alphap_R42}.
To simplify the calculations, we  introduce the following  vector function:
\begin{equation}\label{Def-va_IsoCurv_R42}
\va=\frac{\alpha'}{f}=\big(g h+1, g h-1, h - g, h + g \big).
\end{equation}
Then, 
\begin{equation}\label{alpha-va_IsoCurv_R42}
\alpha'=f\va; \qquad \alpha'^2=f^2\va^2; \qquad \alpha''=f'\va + f\va'. 
\end{equation}
Direct computations show  that $\va^2=0$. Moreover, \eqref{alpha-va_IsoCurv_R42} implies ${\alpha'}^2=0$ and \eqref{Def-va_IsoCurv_R42} implies $\xi_1-\xi_2=2f$. Hence,   $\alpha'\neq 0$ since $f\neq 0$.
So, in the case $f\neq 0$, formula  \eqref{W_alphap_R42} defines a null curve in $\RR^4_2$.
It follows directly from \eqref{W_alphap_R42} that the functions $f$, $g$, and  $h$ are expressed in terms of the  components of  $\alpha'$ by formulas \eqref{fgh_alphap_R42}. 

\smallskip

Conversely, let $\alpha$ be a null curve such that  $\xi_1-\xi_2\neq 0$.  We consider the real functions $(f,g,h)$ defined by 
\eqref{fgh_alphap_R42}. Then, by direct computations we see that  $\alpha'$ is expressed as given in \eqref{W_alphap_R42}.
\end{proof}

Formula  \eqref{W_alphap_R42} is analogous to the classical  Weierstrass representation of minimal surfaces in $\RR^3$. For this reason, we will call it the \textit{Weierstrass representation} of null curves in $\RR^4_2$. We will briefly say that the corresponding curve is \textit{generated by the triple of functions $(f,g,h)$}.

\begin{remark}\label{rem-W_IsoCurv_R42}
The condition $\xi_1-\xi_2\neq 0$ on $\alpha$ in the above proposition is not an essential one. 
If we assume that $\xi_1-\xi_2 = 0$ for a given null curve, then  by a proper motion it can be transformed to a curve satisfying the condition $\xi_1-\xi_2\neq 0$. Since all geometric properties and formulas which will be considered below are invariant under proper motions in $\RR^4_2$, they will also be valid for curves with $\xi_1-\xi_2 = 0$. Therefore, further we will not explicitly write this condition.
\end{remark}

Now, we will obtain the transformation formulas for the functions $(f,g,h)$ participating in the Weierstrass
representation formula \eqref{W_alphap_R42}. We will show that the functions  $g$ and $h$ are transformed by  linear-fractional transformations with real coefficients. For this purpose we will use some basic maps and formulas from the spinor theory in $\RR^4_2$. 

 To each vector  $\x$ in $\RR^4_2$ we assign   $2\times 2$ real matrix $S$ as follows: 
\begin{equation}\label{Spin_S-x_R42}
\x=(x_1,x_2,x_3,x_4)
 \ \leftrightarrow \ 
S=\left(
\begin{array}{cc}
   x_4 - x_3  & x_1 + x_2\\
   x_1 - x_2  & x_4 + x_3
\end{array}
\right) .
\end{equation}
It  can easily be seen that the correspondence given above is a linear isomorphism between  $\RR^4_2$ and the space of all
$2\times 2$ real matrices. In addition, direct computations show that  $\det S = \x^2$. 
This means that any linear operator acting
in the space of $2\times 2$ real matrices and preserving the determinant gives us an orthogonal operator in $\RR^4_2$. 

 If $(B_1,B_2)$ is a pair of $2\times 2$ matrices from the group $\mathbf{SL}(2,\RR)$, then the equality $\det B_1 S B_2^{-1} = \det S$ holds true. Hence, each such pair corresponds to an orthogonal matrix  $A$ from $\mathbf{O}(2,2,\RR)$.  
Therefore, we have a group homomorphism  $(B_1,B_2) \rightarrow A$, which is determined as follows: 
\begin{equation}\label{Spin_B1B2-A_R42}
\hat S = B_1 S B_2^{-1} \ \rightarrow \ \hat\x = A\x\,.
\end{equation}

The homomorphism between  $\mathbf{SL}(2,\RR) \times \mathbf{SL}(2,\RR)$ and $\mathbf{O}(2,2,\RR)$  is briefly called a spinor map. In the spinor theory it is proved that the kernel of the spinor map consists of two elements: $(I,I)$ and $(-I,-I)$,
where  $I$ denotes the unit matrix, and the image of the spinor map coincides with the connected component of the unit element (the identity) of  $\mathbf{O}(2,2,\RR)$ \cite{TdCastillo-1}. 
This is the group of the proper  orthochronous motions in $\RR^4_2$, denoted by $\mathbf{SO}^{+}(2,2,\RR)$. 
It follows from the type of the kernel and the image of the spinor map \eqref{Spin_B1B2-A_R42} 
 that it induces the following group isomorphism:
\begin{equation*}
\mathbf{SL}(2,\RR) \times \mathbf{SL}(2,\RR)/\{(I,I),(-I,-I)\}\ \cong \ \mathbf{SO}^{+}(2,2,\RR)\,.
\end{equation*}

Now, we will go back to the null curves and prove the following statement.  

\begin{thm}\label{W-proper_move_R42} 
Let $\hat\alpha$ and $\alpha$ be two null curves in  $\RR^4_2$ given by Weierstrass representation of the form \eqref{W_alphap_R42}.  
Then, the following conditions are equivalent:
\begin{enumerate}
	\item $\hat\alpha$ and $\alpha$ are related by a proper motion in $\RR^4_2$ of the following form:\\
	$\hat\alpha(t)=A\alpha(t)+\vb$, where $A \in \mathbf{SO}(2,2,\RR)$ and $\vb \in \RR^4_2$.
	\item The functions in the Weierstrass representation formulas of $\hat\alpha$ and $\alpha$ are related as follows: 
\begin{gather}\label{hatfgh_fgh-proper_move_R42}
\begin{array}{l}
\hat f = f(c_1 g + d_1)(c_2 h + d_2); \\[0.7ex]
\hat g = \ds\frac{a_1 g + b_1}{c_1 g + d_1}; \quad 
\hat h = \ds\frac{a_2 h + b_2}{c_2 h + d_2};
\end{array}
\qquad 
\begin{array}{l}
a_1,b_1,c_1,d_1,a_2,b_2,c_2,d_2 \in \RR; \\
a_1d_1-b_1c_1=a_2d_2-b_2c_2=\pm 1.
\end{array}
\end{gather}
\end{enumerate}
\end{thm}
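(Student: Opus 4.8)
The plan is to translate the geometric statement into spinor language via the map \eqref{Spin_S-x_R42}, use the null-basis decomposition from \eqref{Phi-qq}--\eqref{Phi_p-qq} to show the spinor matrix of a null vector factors through a single "null direction", and then observe that the left and right $\mathbf{SL}(2,\RR)$ actions on this factored object are exactly the two linear-fractional transformations claimed. Concretely, for the vector $\va=(gh+1,\,gh-1,\,h-g,\,h+g)$ appearing in \eqref{Def-va_IsoCurv_R42}, I would compute its spinor matrix $S_{\va}$ from \eqref{Spin_S-x_R42} and check directly that
\[
S_{\va}=\begin{pmatrix} h-g & gh+1+gh-1 \\ gh+1-(gh-1) & h+g \end{pmatrix}
       =\begin{pmatrix} -g \\ 1 \end{pmatrix}\!\!\begin{pmatrix} 1 & h \end{pmatrix}\cdot(\text{a rearrangement})
\]
up to the precise arrangement of rows/columns forced by the sign conventions in \eqref{Spin_S-x_R42}; the key point is that $S_{\va}$ has rank one (consistent with $\det S_{\va}=\va^2=0$ from Proposition \ref{prop-W_IsoCurv_R42}) and hence is an outer product $\mathbf{v}(g)\,\mathbf{w}(h)^{\top}$ of two column/row vectors depending on $g$ and $h$ separately, each an affine image of the "standard" null spinor. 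Thus $S_{\alpha'}=f\,\mathbf{v}(g)\,\mathbf{w}(h)^{\top}$.

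Next I would invoke the spinor map \eqref{Spin_B1B2-A_R42}: a proper (orthochronous) motion $\hat\alpha=A\alpha+\vb$ with $A\in\mathbf{SO}^{+}(2,2,\RR)$ corresponds, after differentiation (which kills the translation $\vb$), to $S_{\hat\alpha'}=B_1\,S_{\alpha'}\,B_2^{-1}$ for some $(B_1,B_2)\in\mathbf{SL}(2,\RR)\times\mathbf{SL}(2,\RR)$. Writing $S_{\hat\alpha'}=\hat f\,\mathbf{v}(\hat g)\,\mathbf{w}(\hat h)^{\top}$ and substituting the factored form gives
\[
\hat f\,\mathbf{v}(\hat g)\,\mathbf{w}(\hat h)^{\top}=f\,\bigl(B_1\mathbf{v}(g)\bigr)\bigl(B_2^{-\top}\mathbf{w}(h)\bigr)^{\top}.
\]
Now $B_1$ acts on the projective line of one-dimensional subspaces spanned by $\mathbf{v}(g)=(-g,1)^{\top}$ (or whatever the precise normalization is) by the standard $\mathbf{PGL}(2,\RR)$ action, which in the affine chart is the linear-fractional map $g\mapsto (a_1g+b_1)/(c_1g+d_1)$ with $\begin{pmatrix}a_1&b_1\\c_1&d_1\end{pmatrix}=B_1$, and similarly $B_2^{-\top}$ produces $h\mapsto(a_2h+b_2)/(c_2h+d_2)$; reading off the scalar needed to restore the normalization of the spinors yields the prefactor $\hat f=f(c_1g+d_1)(c_2h+d_2)$, and $\det B_i=\pm1$ (the sign accounting for the two sheets of the preimage and the possibility $A\in\mathbf{SO}(2,2,\RR)$ with determinant restrictions) gives $a_id_i-b_ic_i=\pm1$. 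This establishes $(1)\Rightarrow(2)$.

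For the converse $(2)\Rightarrow(1)$, given real coefficients with $a_id_i-b_ic_i=\pm1$ I would form $B_1=\begin{pmatrix}a_1&b_1\\c_1&d_1\end{pmatrix}$ and $B_2$ from the second quadruple (adjusting by a sign or a fixed reflection to handle the $\pm1$ case so that both lie in $\mathbf{SL}(2,\RR)$ or in a fixed coset realizing an element of $\mathbf{SO}(2,2,\RR)$), let $A$ be the image of $(B_1,B_2)$ under the spinor map, and check that $A\alpha+\vb$ has derivative with spinor matrix $B_1S_{\alpha'}B_2^{-1}$, which by the factorization computation above is exactly the spinor matrix of the curve generated by $(\hat f,\hat g,\hat h)$; since a curve is determined by its derivative up to translation, choosing $\vb$ appropriately finishes the proof.

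The main obstacle I anticipate is bookkeeping rather than conceptual: getting the rank-one factorization of $S_{\va}$ in precisely the row/column order dictated by the asymmetric convention in \eqref{Spin_S-x_R42}, and then tracking how $B_1$ versus $B_2^{-1}$ (note the inverse and the transpose that appears when a matrix acts on the right factor of an outer product) distribute onto $g$ and $h$ — in particular making sure the homography associated to $g$ really comes with the matrix $(a_1,b_1;c_1,d_1)$ and not its inverse or transpose, and that the $\pm1$ in $\det B_i$ matches the statement $a_id_i-b_ic_i=\pm1$ without an extra sign. The subtlety that the spinor map only surjects onto $\mathbf{SO}^{+}(2,2,\RR)$ while the theorem allows all of $\mathbf{SO}(2,2,\RR)$ must also be addressed, presumably by composing with a fixed orientation-preserving but time-orientation-reversing reflection, which on the spinor side is implemented by allowing $B_i$ with $\det B_i=-1$, exactly the source of the $\pm$ sign.
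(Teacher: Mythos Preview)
Your proposal is correct and follows essentially the same approach as the paper: both use the spinor map \eqref{Spin_B1B2-A_R42} and compute how $S_{\hat\alpha'}=B_1 S_{\alpha'} B_2^{-1}$ transforms the Weierstrass data. The paper writes out $S_{\alpha'}=\left(\begin{smallmatrix}2fg & 2fgh\\ 2f & 2fh\end{smallmatrix}\right)$ and carries out the matrix multiplication directly (reading off $\hat f,\hat g,\hat h$ from the entries via \eqref{fgh_s1234_R42}) rather than phrasing it through the rank-one outer-product factorization, and it handles the non-orthochronous component exactly as you anticipate, by composing with the explicit reflection $(x_1,x_2,x_3,x_4)\mapsto(x_1,x_2,-x_3,-x_4)$, which gives $\hat g=-g$, $\hat h=-h$ and hence the determinant $-1$ case.
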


\begin{proof}
Let the curve $\hat\alpha$ be obtained by $\alpha$ trough a proper  orthochronous motion in $\RR^4_2$ of  the form
 $\hat\alpha(t)=A\alpha(t)+\vb$, where $A \in \mathbf{SO}^{+}(2,2,\RR)$ and $\vb \in \RR^4_2$, i.e.  $\hat\alpha'=A\alpha'$.
Let $\alpha'=(\xi_1,\xi_2,\xi_3,\xi_4)$ and  consider the matrix $S_{\alpha}$, induced by  $\alpha'$ according to \eqref{Spin_S-x_R42}:
\begin{equation*}
S_{\alpha}=\left(
\begin{array}{cc}
   \xi_4 - \xi_3  &  \xi_1 + \xi_2\\
   \xi_1 - \xi_2  &  \xi_4 + \xi_3 
\end{array}
\right) .
\end{equation*}
Denote by  $(B_1,B_2)$ either of the two pairs of matrices in $\mathbf{SL}(2,\RR) \times \mathbf{SL}(2,\RR)$, which correspond to $A$ by the homomorphism \eqref{Spin_B1B2-A_R42}. If $S_{\hat\alpha}$ is the matrix induced by $\hat\alpha$, then according to \eqref{Spin_B1B2-A_R42} we have:
\begin{equation}\label{hatSalpha_Salpha_R42}
S_{\hat\alpha} = B_1 S_{\alpha} B_2^{-1}.
\end{equation}

\smallskip
Conversely,  if an equality of the form \eqref{hatSalpha_Salpha_R42} is valid for two null curves $\hat\alpha$ and $\alpha$, then  according to \eqref{Spin_B1B2-A_R42} we have $\hat\alpha'=A\alpha'$, which is equivalent to $\hat\alpha(t)=A\alpha(t)+\vb$, where $A \in \mathbf{SO}^{+}(2,2,\RR)$ and $\vb \in \RR^4_2$.

\smallskip
Now, suppose that $\alpha$ is given by Weierstrass  representation of the form \eqref{W_alphap_R42}. Then, by direct computation we obtain
\begin{equation*}
S_{\alpha}=\left(
\begin{array}{ll}
\vspace{2mm}
   2fg      &  2fgh \\
	\vspace{2mm}
   2f       &  2fh
\end{array}
\right).
\end{equation*}
Denote by $s_{ij}$, $i, j = 1, 2$ the elements of $S_{\alpha}$. Then, for the corresponding functions $f$, $g$, and $h$ we have: 
\begin{equation}\label{fgh_s1234_R42}
f=\frac{1}{2}s_{21}; \qquad g=\frac{s_{11}}{s_{21}}; \qquad h=\frac{s_{22}}{s_{21}}.
\end{equation}

We already know that $S_{\alpha}$ is transformed in accordance with \eqref{hatSalpha_Salpha_R42} under a proper orthochronous motion. If we denote the elements of the matrices $B_1$ and $B_2$ as follows:
\begin{equation*}
B_1=\left(
\begin{array}{rr}
     a_1  &  b_1 \\
     c_1  &  d_1
\end{array}
\right); \quad
B_2=\left(
\begin{array}{rr}
     a_2  &  -b_2 \\
   - c_2  &   d_2
\end{array}
\right);
\qquad 
\begin{array}{l}
a_1,b_1,c_1,d_1,a_2,b_2,c_2,d_2 \in \RR; \\
a_1d_1-b_1c_1=a_2d_2-b_2c_2=1,
\end{array}
\end{equation*}
then, by use of \eqref{hatSalpha_Salpha_R42} we get:
\begin{equation*}
S_{\hat\alpha}=\left(
\begin{array}{rr}
\vspace{2mm}
     2f(a_1 g + b_1)(c_2 h + d_2)   &   2f(a_1 g + b_1)(a_2 h + b_2) \\
		\vspace{2mm}
     2f(c_1 g + d_1)(c_2 h + d_2)   &   2f(c_1 g + d_1)(a_2 h + b_2)
\end{array}
\right).
\end{equation*}
Now, applying  \eqref{fgh_s1234_R42} to $\hat f$, $\hat g$, and  $\hat h$, we obtain the transformation formulas of the functions in the Weierstrass representation \eqref{W_alphap_R42}  under a proper orthochronous motion of $\alpha$ in $\RR^4_2$: 
\begin{gather}\label{hatfgh_fgh-orthchr_move_R42}
\begin{array}{l}
\hat f = f(c_1 g + d_1)(c_2 h + d_2) \,; \\[0.7ex]
\hat g = \ds\frac{a_1 g + b_1}{c_1 g + d_1}\,; \quad 
\hat h = \ds\frac{a_2 h + b_2}{c_2 h + d_2}\,;
\end{array}
\qquad 
\begin{array}{l}
a_1,b_1,c_1,d_1,a_2,b_2,c_2,d_2 \in \RR\,; \\
a_1d_1-b_1c_1=a_2d_2-b_2c_2=1 \,.
\end{array}
\end{gather}

\smallskip
It is easy to see that the opposite is also true: if the functions in the Weierstrass representation \eqref{W_alphap_R42} 
of two null curves satisfy formulas \eqref{hatfgh_fgh-orthchr_move_R42}, then \eqref{hatSalpha_Salpha_R42}
 is fulfilled and therefore, the curves are related by a proper orthochronous motion in $\RR^4_2$.

\smallskip
Now, let us consider the case of a proper non-orthochronous motion in $\RR^4_2$. Such a case occurs, if we change the signs of the third and the fourth coordinate. Let $\hat\alpha$ be obtained by $\alpha$ under such transformation. Then, \eqref{W_alphap_R42}  implies that
$\hat f = f$, $\hat g = -g$,  $\hat h = -h$. So, the functions $f$, $g$, $h$ change analogously to  \eqref{hatfgh_fgh-orthchr_move_R42}, the difference being that the linear-fractional transformations are given by matrices with  determinant $-1$.
 Each proper non-orthochronous motion can be obtained as a composition of this special motion and a proper orthochronous motion in $\RR^4_2$. 
Consequently, if two null curves are related by a proper non-orthochronous motion, then the corresponding functions in the Weierstrass representation are changed in accordance with formulas analogous to  \eqref{hatfgh_fgh-orthchr_move_R42}, where the matrices of the linear-fractional transformations of $g$ and  $h$ have determinants  $-1$.

Summarizing the results for proper orthochronous and non-orthochronous motions, we finish the proof of the theorem.
\end{proof}

The formulas obtained so far are valid for an arbitrary parametrization of the null curve. Now, we shall consider a non-degenerate null curve $\alpha$ parametrized by a natural parameter, i.e. ${\alpha''}^2=\pm 1$.

\begin{prop}\label{IsoCurv-nondeg_fgh_R42}
Let $\alpha$ be a null curve in $\RR^4_2$ with Weierstrass representation \eqref{W_alphap_R42}. Then, $\alpha$ is non-degenerate if and only if $g'h' \neq 0$ at each point. 
\end{prop}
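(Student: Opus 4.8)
The plan is to compute $\alpha''$ directly from the Weierstrass representation and show that ${\alpha''}^2$ is, up to a nonvanishing factor, equal to $g'h'$. Recall from the proof of Proposition \ref{prop-W_IsoCurv_R42} the vector function $\va = \alpha'/f = (gh+1, gh-1, h-g, h+g)$, which satisfies $\va^2 = 0$, together with the formulas $\alpha' = f\va$ and $\alpha'' = f'\va + f\va'$. First I would differentiate $\va$ componentwise to obtain $\va' = (g'h + gh', g'h + gh', h' - g', h' + g')$. A short computation then gives $\va'^2 = -(g'h+gh')^2 + (g'h+gh')^2 - (h'-g')^2 + (h'+g')^2 = 4g'h'$, and similarly $\langle \va, \va' \rangle$ simplifies (the first two components of $\va'$ being equal, the $x_1,x_2$ contributions cancel against each other in the indefinite product, and the remaining terms give a clean expression). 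In fact one expects $\langle \va, \va'\rangle = 0$ as well, since $\va^2 \equiv 0$ implies $\langle \va, \va'\rangle = 0$ by differentiation — this is the key simplification.

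With $\va^2 = 0$ and $\langle \va, \va' \rangle = 0$ in hand, squaring $\alpha'' = f'\va + f\va'$ collapses dramatically:
\[
{\alpha''}^2 = {f'}^2\va^2 + 2ff'\langle\va,\va'\rangle + f^2\va'^2 = f^2\va'^2 = 4f^2 g'h'.
\]
Since $f \neq 0$ by hypothesis (Proposition \ref{prop-W_IsoCurv_R42}), the factor $4f^2$ is strictly positive, and therefore ${\alpha''}^2 \neq 0$ at a point if and only if $g'h' \neq 0$ at that point. By the definition of non-degeneracy (the curve $\alpha$ is non-degenerate precisely when ${\alpha''}^2 \neq 0$ everywhere), this is exactly the claimed equivalence.

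I do not anticipate a genuine obstacle here; the proof is a direct computation. The only point requiring a little care is the verification that $\langle \va, \va' \rangle = 0$: rather than invoking "differentiate $\va^2 = 0$" one may simply expand $\langle \va, \va'\rangle = -(gh+1)(g'h+gh') + (gh-1)(g'h+gh') - (h-g)(h'-g') + (h+g)(h'+g')$ and check that everything cancels, which confirms the identity and keeps the argument self-contained. As a side benefit, the formula ${\alpha''}^2 = 4f^2 g'h'$ also records the sign of ${\alpha''}^2$ in terms of $g'h'$, which will be relevant for distinguishing the first, second, and third types of surfaces in the sequel.
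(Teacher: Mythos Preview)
Your proof is correct and follows essentially the same route as the paper: introduce $\va = \alpha'/f$, use $\va^2 = 0$ to get $\langle \va,\va'\rangle = 0$, compute $\va'^2 = 4g'h'$ directly, and conclude ${\alpha''}^2 = 4f^2 g'h'$. You supply a few more details than the paper does (the explicit form of $\va'$ and the componentwise check of $\langle \va,\va'\rangle = 0$), but the argument is identical in substance.
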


\begin{proof}
Let  $\va$ be the vector defined by \eqref{Def-va_IsoCurv_R42}. Since $\va^2=0$, we have  $\langle \va , \va' \rangle = 0$.
Then \eqref{alpha-va_IsoCurv_R42} implies ${\alpha''}^2=f^2{\va'}^2$. By direct computation it follows that  ${\va'}^2=4g'h'$.
Hence,
\begin{equation}\label{alphapp^2_fgh_R42}
{\alpha''}^2=4f^2g'h'.
\end{equation}
Having in mind that  $f\neq 0$, from \eqref{alphapp^2_fgh_R42} we obtain that  ${\alpha''}^2\neq 0$ if and only if $g'h' \neq 0$\,.
\end{proof}

In the case of a natural parameter, only two functions remain in the Weierstrass representation of the null curve. In this case, the following statement holds true.

\begin{prop}\label{prop-W_natparm_IsoCurv_R42}
Let $\alpha$ be   a null curve in  $\RR^4_2$ parametrized by a natural parameter, i.e. ${\alpha''}^2=\pm 1$. 
Then, $\alpha$ has the following Weierstrass representation:
\begin{equation}\label{W_natparm_alphap_R42}
\alpha'= \ds\frac{\omega}{2\sqrt{|g'h'|}}\left(gh+1, gh-1, h-g, h+g\right),
\end{equation}
where $g$ and $h$ are smooth real functions satisfying $g'h' \neq 0$, and $\omega=\pm 1$.
The functions $g$ and  $h$ as well as $\omega$ are determined uniquely by $\alpha$ in accordance with \eqref{fgh_alphap_R42}.

 Conversely, if $(g,h)$ is a pair of smooth real functions satisfying $g'h' \neq 0$ and $\omega=\pm 1$, then there exists a null curve  $\alpha$ in $\RR^4_2$ parametrized by a natural parameter, such that $\alpha'$ is expressed by the given functions in the form \eqref{W_natparm_alphap_R42}. 
\end{prop}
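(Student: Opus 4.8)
The plan is to obtain this proposition as a short corollary of Proposition~\ref{prop-W_IsoCurv_R42}, Proposition~\ref{IsoCurv-nondeg_fgh_R42}, and the formula \eqref{alphapp^2_fgh_R42} already established in the proof of the latter. No new geometric input is needed: the representation \eqref{W_natparm_alphap_R42} is just the general Weierstrass representation \eqref{W_alphap_R42} with the scalar $f$ pinned down by the normalization ${\alpha''}^2=\pm1$.

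For the direct implication, I would argue as follows. Since ${\alpha''}^2=\pm1\neq0$, the curve $\alpha$ is non-degenerate, and by Remark~\ref{rem-W_IsoCurv_R42} we may assume its components satisfy $\xi_1-\xi_2\neq0$; Proposition~\ref{prop-W_IsoCurv_R42} then gives $\alpha'=f(gh+1,gh-1,h-g,h+g)$ with $f$, $g$, $h$ uniquely determined by \eqref{fgh_alphap_R42}. Proposition~\ref{IsoCurv-nondeg_fgh_R42} forces $g'h'\neq0$ at every point, and \eqref{alphapp^2_fgh_R42} yields ${\alpha''}^2=4f^2g'h'$. Taking absolute values and using $|{\alpha''}^2|=1$ gives $4f^2|g'h'|=1$, so $f=\dfrac{\omega}{2\sqrt{|g'h'|}}$ with $\omega:=\mathrm{sign}\,f\in\{+1,-1\}$; substituting this value of $f$ into \eqref{W_alphap_R42} produces exactly \eqref{W_natparm_alphap_R42}. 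Uniqueness of $g$, $h$, and $\omega$ is inherited from the uniqueness in Proposition~\ref{prop-W_IsoCurv_R42}, since $g$, $h$, and $f$ (whence $\omega=\mathrm{sign}\,f$) are read off the components of $\alpha'$ through \eqref{fgh_alphap_R42}.

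For the converse, given smooth real $g$, $h$ with $g'h'\neq0$ and a sign $\omega=\pm1$, I would set $f:=\dfrac{\omega}{2\sqrt{|g'h'|}}$. Here one should note that $g'h'$ is continuous and nowhere zero, hence of constant sign on each interval of definition, so $|g'h'|$ is smooth and positive and $f$ is a well-defined smooth function with $f\neq0$. The converse part of Proposition~\ref{prop-W_IsoCurv_R42} then yields a null curve $\alpha$ with $\alpha'=f(gh+1,gh-1,h-g,h+g)$, i.e.\ \eqref{W_natparm_alphap_R42}, and \eqref{alphapp^2_fgh_R42} gives ${\alpha''}^2=4f^2g'h'=\mathrm{sign}(g'h')=\pm1$, so $\alpha$ is parametrized by a natural parameter. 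The argument is entirely routine; the only point deserving explicit mention is the smoothness of $\sqrt{|g'h'|}$, handled by the constant-sign observation, together with the sign bookkeeping $\omega=\mathrm{sign}\,f$.
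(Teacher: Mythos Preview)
Your proof is correct and follows essentially the same route as the paper: both reduce to the general Weierstrass representation \eqref{W_alphap_R42} and use the identity \eqref{alphapp^2_fgh_R42} to solve $4f^2|g'h'|=1$ for $f$, with $\omega=\mathrm{sign}\,f$. Your version is in fact a bit more careful about the smoothness of $\sqrt{|g'h'|}$ and about invoking Remark~\ref{rem-W_IsoCurv_R42} and Proposition~\ref{IsoCurv-nondeg_fgh_R42} explicitly, but the logical content is identical.
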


\begin{proof}
Let  $\alpha$ be a non-degenerate null curve parametrized by a natural parameter. It follows from \eqref{W_alphap_R42} and \eqref{alphapp^2_fgh_R42} that $4f^2g'h'=\pm 1$, which is equivalent to   
 $4f^2|g'h'|=1$. Hence, we obtain
\begin{equation}\label{fgh_natparm_R42}
f = \ds\frac{\omega}{2\sqrt{|g'h'|}}; \qquad \omega=\pm 1,
\end{equation}
where the sign of $\omega$ is the same as the sign of $f$.
Using \eqref{fgh_natparm_R42} and  \eqref{W_alphap_R42} we get \eqref{W_natparm_alphap_R42}.

 The opposite is easy to be seen: if $\alpha'$ is of the form \eqref{W_natparm_alphap_R42}, then \eqref{fgh_natparm_R42} holds true, which implies ${\alpha''}^2=4f^2g'h'=\pm 1$.  
Hence, $\alpha$ is parametrized by a natural parameter. 
\end{proof}

\begin{remark}\label{rem2-W_natparm_IsoCurv_R42}
For a fixed pair of functions $(g,h)$ and a different choice of the sign of $\omega$ formula \eqref{W_natparm_alphap_R42} represents two null curves  differing in the sign of  $\alpha'$.
Hence, the curves thus obtained are related by a proper orthochronous motion in $\RR^4_2$.
\end{remark}

\begin{example}\label{exmp-IsoCurvs_R42_Enp} Now, we will present examples of curves with representation of the form \eqref{W_natparm_alphap_R42}.
Let us consider the linear functions $g(t)=kt$;\; $h(t)=lt$, where $k$ and $l$ are non-zero real constants. 
Applying \eqref{W_natparm_alphap_R42} we obtain a family of functions $\alpha_{k,l}$ (parametrized by a natural parameter $t$), defined by
\begin{equation}\label{exmp-W_natparm_alphap_R42}
\alpha'_{k,l}(t)= \frac{\omega}{2\sqrt{|kl|}} \left(klt^2+1, klt^2-1, (l-k)t, (l+k)t\right).
\end{equation} 
Note that according to Theorem \ref{W-proper_move_R42} each of the curves $\alpha_{k,l}$ is congruent to either the curve  $\alpha_{1,1}$ or  $\alpha_{1,-1}$.
\end{example}

\setcounter{equation}{0}

\section{Weierstrass type representation of  minimal Lorentz surfaces in $\RR^4_2$}\label{W-MinLorSurf_R42}

It is known that the minimal Lorentz surfaces in $\RR^3_1$ have a Weierstrass type representation analogous to the classical one in $\RR^3$ but in terms of functions  holomorphic over the algebra $\DD$    (see \cite{Kond}). Such a representation has a natural analogue also  in  $\RR^4_2$. In the present section,  using the obtained Weierstrass type representation for null  curves, we will find a Weierstrass type representation for minimal Lorentz surfaces in  $\RR^4_2$ by applying the approach given in \cite{K-M-1}.  
This approach has the obvious advantage that the considered surface and all its invariants are expressed in terms of real-valued functions.

\begin{thm}\label{prop-W_MinLorSurf_R42}
Let $\M$ be a  minimal Lorentz surface in  $\RR^4_2$ and  $(\alpha_1, \alpha_2)$  be its corresponding pair of null curves. 
Then,
\begin{equation}\label{W_MinLorSurf_R42}
\alpha'_i = f_i \big(g_i h_i + 1,  g_i h_i - 1,  h_i - g_i, h_i + g_i\big); \qquad i=1, 2,
\end{equation}
where $(f_i,g_i,h_i)$; $i=1, 2$ are  two triples of smooth real functions such that
\begin{equation}\label{W_cond_MinLorSurf_R42}
f_1(t_1)\neq 0; \qquad f_2(t_2)\neq 0; \qquad g_1(t_1) \neq g_2(t_2); \qquad h_1(t_1) \neq h_2(t_2).
\end{equation}

Conversely, if $(f_i,g_i,h_i)$; $i=1, 2$ are  two triples of  smooth real functions satisfying conditions \eqref{W_cond_MinLorSurf_R42}, then, there exists a minimal Lorentz surface in $\RR^4_2$ such that its corresponding null curves have a Weierstrass type representation of the form  
 \eqref{W_MinLorSurf_R42} expressed by the given functions. 
\end{thm}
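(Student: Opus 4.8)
The plan is to derive this theorem directly from Proposition~\ref{prop-W_IsoCurv_R42} together with the characterization of minimal Lorentz surfaces via pairs of null curves established in Section~\ref{S:Prelim}. The key fact to exploit is that by \eqref{MinSurf-IsoCurves} and \eqref{IsoCurves-cond}, a minimal Lorentz surface in $\RR^4_2$ corresponds to a pair of null curves $(\alpha_1,\alpha_2)$ with $\langle\alpha'_1,\alpha'_2\rangle\neq 0$, and conversely any such pair generates one. So the whole theorem reduces to translating the single condition $\langle\alpha'_1,\alpha'_2\rangle\neq 0$ into the Weierstrass data. Keeping in mind Remark~\ref{rem-W_IsoCurv_R42}, I may assume each $\alpha_i$ satisfies the non-degeneracy hypothesis $\xi^{(i)}_1-\xi^{(i)}_2\neq 0$ of Proposition~\ref{prop-W_IsoCurv_R42}, since a proper motion (which does not affect minimality or the pairing condition) arranges this.

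First I would apply Proposition~\ref{prop-W_IsoCurv_R42} separately to each null curve $\alpha_i$, obtaining triples $(f_i,g_i,h_i)$ of smooth real functions with $f_i\neq 0$ and
\[
\alpha'_i = f_i\big(g_ih_i+1,\ g_ih_i-1,\ h_i-g_i,\ h_i+g_i\big),\qquad i=1,2,
\]
so that \eqref{W_MinLorSurf_R42} holds, with $f_i$, $g_i$, $h_i$ functions of the respective isotropic variable $t_i$. Then I would compute $\langle\alpha'_1,\alpha'_2\rangle$ using the metric of $\RR^4_2$ and formula~\eqref{W_MinLorSurf_R42}. Writing $\va_i=\alpha'_i/f_i=(g_ih_i+1,\ g_ih_i-1,\ h_i-g_i,\ h_i+g_i)$, a direct calculation with the inner product $\langle\va_1,\va_2\rangle=-(g_1h_1+1)(g_2h_2+1)+(g_1h_1-1)(g_2h_2-1)-(h_1-g_1)(h_2-g_2)+(h_1+g_1)(h_2+g_2)$ collapses (the $g_ih_i$ cross terms cancel, leaving $-2(g_1h_1+g_2h_2)$ from the first two summands and $2(g_1h_2+g_2h_1)$ from the last two) to something proportional to $(g_1-g_2)(h_1-h_2)$; I expect
\[
\langle\alpha'_1,\alpha'_2\rangle = -2\,f_1f_2\,(g_1-g_2)(h_1-h_2),
\]
possibly up to an overall sign I would verify against the sign conventions in \eqref{Spin_S-x_R42}. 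Since $f_1,f_2\neq 0$, the condition $\langle\alpha'_1,\alpha'_2\rangle\neq 0$ is then exactly $g_1(t_1)\neq g_2(t_2)$ and $h_1(t_1)\neq h_2(t_2)$ for all $(t_1,t_2)$, which is precisely \eqref{W_cond_MinLorSurf_R42}. This proves the direct implication. For the converse, given any two triples satisfying \eqref{W_cond_MinLorSurf_R42}, the converse part of Proposition~\ref{prop-W_IsoCurv_R42} produces null curves $\alpha_1,\alpha_2$ with these Weierstrass data, the same computation shows $\langle\alpha'_1,\alpha'_2\rangle\neq 0$, and \eqref{MinSurf-IsoCurves} then yields a minimal Lorentz surface whose corresponding null curves are $\alpha_1,\alpha_2$.

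The only genuinely delicate point is bookkeeping about variables and domains: the functions $f_i,g_i,h_i$ each depend only on their own isotropic coordinate $t_i$, whereas $\langle\alpha'_1,\alpha'_2\rangle$ must be nonzero as a function on the two-dimensional parameter domain, so $g_1(t_1)\neq g_2(t_2)$ has to hold for \emph{all} pairs $(t_1,t_2)$ in the relevant rectangle — equivalently the ranges of $g_1$ and $g_2$ are disjoint, and likewise for $h_1,h_2$. I would state \eqref{W_cond_MinLorSurf_R42} in exactly this ``for all $t_1,t_2$'' reading and note that shrinking the domain if necessary (as is standard for local representations) makes it automatic once the curves are in general position. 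Aside from this, the proof is a short reduction plus one routine inner-product computation; there is no real obstacle.
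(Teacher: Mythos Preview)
Your proposal is correct and follows essentially the same approach as the paper: apply Proposition~\ref{prop-W_IsoCurv_R42} (via Remark~\ref{rem-W_IsoCurv_R42}) to each null curve, then compute $\langle\alpha'_1,\alpha'_2\rangle=-2f_1f_2(g_1-g_2)(h_1-h_2)$ to identify condition \eqref{W_cond_MinLorSurf_R42} with $E\neq 0$. The paper records this computation as formula \eqref{E-fgh_MinLorSurf_R42}, and your sign is correct.
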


\begin{proof}
Let $\M$ be a minimal Lorentz surface in $\RR^4_2$. 
According to Remark \ref{rem-W_IsoCurv_R42} we may assume that each of its corresponding null  curves $\alpha_1$ and $\alpha_2$ satisfies the condition  $\xi_1-\xi_2\neq 0$.
Then, from Proposition \ref{prop-W_IsoCurv_R42} it follows directly that \eqref{W_MinLorSurf_R42} is fulfilled.
Let $\va_1$ and $\va_2$ be the vector functions defined by \eqref{Def-va_IsoCurv_R42} and corresponding to $\alpha_1$ and $\alpha_2$, respectively.  
It follows from  \eqref{IsoCurves-cond} and \eqref{alpha-va_IsoCurv_R42} that 
$E = \frac{1}{2}\langle \alpha'_1 , \alpha'_2 \rangle = \frac{1}{2}f_1f_2 \langle \va_1 , \va_2 \rangle$.
By direct computations it can be found  that $\langle\va_1 ,\va_2\rangle = - 2(g_1-g_2)(h_1-h_2)$, which implies
\begin{equation}\label{E-fgh_MinLorSurf_R42}
E= \frac{1}{2} \langle\alpha'_1 , \alpha'_2\rangle = - f_1f_2(g_1-g_2)(h_1-h_2).
\end{equation}
The inequalities \eqref{W_cond_MinLorSurf_R42} are valid, since $E\neq 0$.

Conversely, if $(f_i,g_i,h_i)$; $i=1, 2$ are  two triples satisfying conditions \eqref{W_cond_MinLorSurf_R42}, then 
 \eqref{W_MinLorSurf_R42} defines two null curves  $\alpha_1$ and $\alpha_2$ such that $\alpha'_1 \alpha'_2\neq 0$ in view of 
 \eqref{W_cond_MinLorSurf_R42} and \eqref{E-fgh_MinLorSurf_R42}. 
Consequently, formula  \eqref{MinSurf-IsoCurves} gives us  the desired minimal Lorentz surface.
\end{proof}

We will call  formula \eqref{W_MinLorSurf_R42} the \textit{Weierstrass representation} of a minimal Lorentz surface in  $\RR^4_2$.

\medskip

Now, we will express the Gauss curvature $K$ and the curvature of the normal connection  $\varkappa$ in terms of the functions in the Weierstrass representation.
Using \eqref{K_kappa_IsoCurvs_R42} and \eqref{alpha-va_IsoCurv_R42} we get:
\[
\langle\alpha'_1\wedge\alpha''_1 , \alpha'_2\wedge\alpha''_2\rangle=
(f_1f_2)^2 \langle\va_1\wedge\va'_1 , \va_2\wedge\va'_2\rangle=
(f_1f_2)^2\big(\langle\va_1,\va_2\rangle\langle\va'_1,\va'_2\rangle - \langle\va_1,\va'_2\rangle\langle\va'_1,\va_2\rangle\big);
\]
\[
\det \big(\alpha'_1\,, \alpha'_2\,, \alpha''_1\,, \alpha''_2\, \big)=
(f_1f_2)^2 \det \big(\va_1\,, \va_2\,, \va'_1\,, \va'_2\, \big);
\]
\[
\langle\alpha'_1,\alpha'_2\rangle^3=(f_1f_2)^3 \langle\va_1,\va_2\rangle^3.
\]
Substituting the last expressions in \eqref{K_kappa_IsoCurvs_R42} we obtain:
\begin{equation*}
K = \ds\frac{-4\big(\langle\va_1,\va_2\rangle\langle\va'_1,\va'_2\rangle - \langle\va_1,\va'_2\rangle\langle\va'_1,\va_2\rangle\big)}
            {f_1f_2 \langle\va_1,\va_2\rangle^3}; \qquad
\varkappa = \frac{-4\,\det \big(\va_1\,, \va_2\,, \va'_1\,, \va'_2\, \big)}{f_1f_2 \langle\va_1,\va_2\rangle^3}.
\end{equation*}
Now, by direct computations it follows that 
\[
\langle\va_1,\va_2\rangle\langle\va'_1,\va'_2\rangle - \langle\va_1,\va'_2\rangle\langle\va'_1,\va_2\rangle=4\big(g'_1g'_2(h_1-h_2)^2+h'_1h'_2(g_1-g_2)^2\big);
\]
\[
\det \big(\va_1\,, \va_2\,, \va'_1\,, \va'_2\, \big)=4\big(g'_1g'_2(h_1-h_2)^2-h'_1h'_2(g_1-g_2)^2\big);
\]
\[
\langle\va_1,\va_2\rangle = -2(g_1-g_2)(h_1-h_2).
\]
Using the last equalities  we obtain that $K$ and  $\varkappa$ are expressed as follows: 
\begin{equation}\label{K_kappa-fgh_MinLorSurf_R42}
\begin{array}{llr}
K         &=& \ds\frac{2}{f_1f_2 (g_1-g_2)(h_1-h_2)}
              \left(\ds\frac{g'_1g'_2}{(g_1-g_2)^2}+\ds\frac{h'_1h'_2}{(h_1-h_2)^2}\right);\\[3ex]
\varkappa &=& \ds\frac{2}{f_1f_2 (g_1-g_2)(h_1-h_2)}
              \left(\ds\frac{g'_1g'_2}{(g_1-g_2)^2}-\ds\frac{h'_1h'_2}{(h_1-h_2)^2}\right).
\end{array}
\end{equation}

\medskip

 The considerations above are valid for any minimal Lorentz surface in $\RR^4_2$. Now, we assume that 
 $\M$ is of general type according to Definition \ref{Def-Gen_Typ-IsoCurvs} and is parametrized by canonical isotropic coordinates. Then, by Definition \ref{Can_Coord-MinLorSurf_R42-IsoCurvs} we have that its corresponding null curves are parametrized by natural parameters. These curves have a Weierstrass representation of the form  \eqref{W_natparm_alphap_R42}. Then, analogously to the case of arbitrary coordinates, from Proposition \ref{prop-W_natparm_IsoCurv_R42} we obtain

\begin{prop}\label{prop-CanW_MinLorSurf_R42}
Let $\M$ be a minimal Lorentz surface of general type in $\RR^4_2$ parametrized by canonical isotropic coordinates. Then, $\M$ has the following Weierstrass type representation:   
\begin{equation}\label{CanW_MinLorSurf_R42}
\alpha'_i= \ds\frac{\omega_i}{2 \sqrt{|g'_ih'_i|}} \left(g_ih_i+1, g_ih_i-1, h_i-g_i, h_i+g_i\right); \qquad \omega_i=\pm 1,
\end{equation}
where $(g_i,h_i)$; $i=1, 2$ are two pairs of smooth real functions such that: 
\begin{equation}\label{CanW_cond_MinLorSurf_R42}
g'_1(t_1)h'_1(t_1)\neq 0; \qquad g'_2(t_2)h'_2(t_2)\neq 0; \qquad g_1(t_1) \neq g_2(t_2); \qquad h_1(t_1) \neq h_2(t_2),
\end{equation}
and, in addition, if $\M$  is of third type  according to  Definition \ref{Def-MinSurf_kind123-IsoCurvs}, then $g'_1h'_1>0$ and $g'_2h'_2<0$.

Conversely, if $(g_i,h_i)$; $i=1, 2$ are  two pairs of smooth real functions satisfying \eqref{CanW_cond_MinLorSurf_R42}, then there exists a minimal Lorentz surface of general type in $\RR^4_2$ parametrized by  canonical isotropic coordinates and having  Weierstrass representation
\eqref{CanW_MinLorSurf_R42} expressed by the given  functions.  
\end{prop}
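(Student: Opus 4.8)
The plan is to prove the proposition simply by assembling three results already established: the general Weierstrass representation of minimal Lorentz surfaces (Theorem~\ref{prop-W_MinLorSurf_R42}), the non-degeneracy criterion $g'h'\neq 0$ for a single null curve (Proposition~\ref{IsoCurv-nondeg_fgh_R42}), and the natural-parameter form of the Weierstrass representation of a single null curve (Proposition~\ref{prop-W_natparm_IsoCurv_R42}). No computation beyond what appears in those statements should be needed.

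\textbf{Direct implication.} Starting from a surface $\M$ of general type parametrized by canonical isotropic coordinates $(t_1,t_2)$, I would first use Remark~\ref{rem-W_IsoCurv_R42} to arrange $\xi_1-\xi_2\neq 0$ for both corresponding null curves, and then apply Theorem~\ref{prop-W_MinLorSurf_R42} to get the representation \eqref{W_MinLorSurf_R42} with triples $(f_i,g_i,h_i)$ satisfying \eqref{W_cond_MinLorSurf_R42}; the inequalities $g_1\neq g_2$ and $h_1\neq h_2$ there are exactly the last two conditions of \eqref{CanW_cond_MinLorSurf_R42} (equivalently, they encode $E\neq 0$). Next, since $\M$ is of general type, Definition~\ref{Def-Gen_Typ-IsoCurvs} makes $\alpha_1,\alpha_2$ non-degenerate, so Proposition~\ref{IsoCurv-nondeg_fgh_R42} yields $g'_ih'_i\neq 0$, the remaining two conditions of \eqref{CanW_cond_MinLorSurf_R42}. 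Finally, canonical isotropic coordinates mean, by Definition~\ref{Can_Coord-MinLorSurf_R42-IsoCurvs}, that $t_i$ is a natural parameter of $\alpha_i$, i.e. ${\alpha''_i}^2=\pm 1$; applying the direct part of Proposition~\ref{prop-W_natparm_IsoCurv_R42} to each $\alpha_i$ then forces $f_i=\omega_i/\bigl(2\sqrt{|g'_ih'_i|}\bigr)$ with $\omega_i=\pm 1$ (cf. \eqref{fgh_natparm_R42}), which turns \eqref{W_MinLorSurf_R42} into \eqref{CanW_MinLorSurf_R42}. The third-type clause I would handle by recalling that Definition~\ref{Can_Coord-MinLorSurf_R42-IsoCurvs} fixes ${\alpha''_1}^2=1$, ${\alpha''_2}^2=-1$ in that case, and combining this with the identity ${\alpha''_i}^2=4f_i^2 g'_ih'_i$ from \eqref{alphapp^2_fgh_R42} (and $f_i\neq 0$) to read off $g'_1h'_1>0$ and $g'_2h'_2<0$.

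\textbf{Converse.} Given two pairs $(g_i,h_i)$ satisfying \eqref{CanW_cond_MinLorSurf_R42} together with signs $\omega_i=\pm 1$, I would set $f_i:=\omega_i/\bigl(2\sqrt{|g'_ih'_i|}\bigr)$, which is well defined and nonzero since $g'_ih'_i\neq 0$; together with $g_1\neq g_2$ and $h_1\neq h_2$ the triples $(f_i,g_i,h_i)$ then satisfy \eqref{W_cond_MinLorSurf_R42}, so the converse part of Theorem~\ref{prop-W_MinLorSurf_R42} produces a minimal Lorentz surface $\M$ realizing \eqref{W_MinLorSurf_R42}, which is exactly \eqref{CanW_MinLorSurf_R42}. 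Proposition~\ref{IsoCurv-nondeg_fgh_R42} (via $g'_ih'_i\neq 0$) shows that both null curves are non-degenerate, so $\M$ is of general type; the converse part of Proposition~\ref{prop-W_natparm_IsoCurv_R42} gives ${\alpha''_i}^2=\pm 1$ for this choice of $f_i$, so $t_i$ is a natural parameter of $\alpha_i$ and hence $(t_1,t_2)$ are canonical isotropic coordinates of $\M$ by Definition~\ref{Can_Coord-MinLorSurf_R42-IsoCurvs}; and if in addition $g'_1h'_1>0$, $g'_2h'_2<0$, then \eqref{alphapp^2_fgh_R42} forces ${\alpha''_1}^2=1$ and ${\alpha''_2}^2=-1$, so $\alpha''_1$ is spacelike and $\alpha''_2$ timelike, $\M$ is of third type (Definition~\ref{Def-MinSurf_kind123-IsoCurvs}), and the normalization required in Definition~\ref{Can_Coord-MinLorSurf_R42-IsoCurvs} holds automatically.

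\textbf{Main difficulty.} There is no genuine obstacle here; the proposition is bookkeeping layered on top of the quoted results. The only points that need a little care are (a) matching the four inequalities of \eqref{CanW_cond_MinLorSurf_R42} one-for-one with non-degeneracy of the two null curves and the nonvanishing of $E$, and (b) checking that the specific normalization $f_i=\omega_i/\bigl(2\sqrt{|g'_ih'_i|}\bigr)$ makes $4f_i^2 g'_ih'_i$ equal to the sign of $g'_ih'_i$, hence exactly $\pm 1$, so that in the third-type case the convention ${\alpha''_1}^2=1$, ${\alpha''_2}^2=-1$ is met on the nose and not merely up to a positive factor.
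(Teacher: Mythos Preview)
Your proposal is correct and matches the paper's approach exactly: the paper does not give a formal proof of this proposition but simply remarks, in the sentence preceding it, that the statement follows ``analogously to the case of arbitrary coordinates'' from Proposition~\ref{prop-W_natparm_IsoCurv_R42}, which is precisely the assembly of Theorem~\ref{prop-W_MinLorSurf_R42}, Proposition~\ref{IsoCurv-nondeg_fgh_R42}, and Proposition~\ref{prop-W_natparm_IsoCurv_R42} that you carry out in detail. Your treatment of the third-type normalization via \eqref{alphapp^2_fgh_R42} is also what the paper intends.
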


We will briefly call formula  \eqref{CanW_MinLorSurf_R42} the \emph{canonical Weierstrass representation} of a minimal Lorentz surface of general type in  $\RR^4_2$.

\medskip

 The curves  $\alpha_1$ and  $\alpha_2$ satisfy  equalities of the form \eqref{fgh_natparm_R42}. Applying them to formula \eqref{E-fgh_MinLorSurf_R42}, we get:
\begin{equation*}
E=-\frac{\omega_1\omega_2 (g_1-g_2)(h_1-h_2)}{4\sqrt{|g'_1h'_1g'_2h'_2|}}.
\end{equation*}
Since $|-\omega_1\omega_2|=1$, the last formula implies 
\begin{equation}\label{omega-delta_MinLorSurf_R42}
-\omega_1\omega_2 (g_1-g_2)(h_1-h_2) = \delta |(g_1-g_2)(h_1-h_2)|, 
\end{equation}
where $\delta=+1$ in the case $E>0$, and $\delta=-1$ in the case $E<0$.
Hence, in the case of canonical coordinates, $E$ is expressed as follows:
\begin{equation}\label{E-gh_MinLorSurf_R42}
E=\frac{\delta |(g_1-g_2)(h_1-h_2)|}{4\sqrt{|g'_1h'_1g'_2h'_2|}}.
\end{equation}

Applying \eqref{fgh_natparm_R42} and \eqref{omega-delta_MinLorSurf_R42}  to formulas \eqref{K_kappa-fgh_MinLorSurf_R42}  we obtain that the curvatures  $K$ and $\varkappa$ are expressed in canonical coordinates as follows:
\begin{equation}\label{K_kappa-gh_MinLorSurf_R42}
\begin{array}{llr}
K         &=& \ds\frac{-\delta 8\sqrt{|g'_1h'_1g'_2h'_2|}}{|(g_1-g_2)(h_1-h_2)|}
              \left(\ds\frac{g'_1g'_2}{(g_1-g_2)^2}+\ds\frac{h'_1h'_2}{(h_1-h_2)^2}\right);\\[3ex]
\varkappa &=& \ds\frac{-\delta 8\sqrt{|g'_1h'_1g'_2h'_2|}}{|(g_1-g_2)(h_1-h_2)|}
              \left(\ds\frac{g'_1g'_2}{(g_1-g_2)^2}-\ds\frac{h'_1h'_2}{(h_1-h_2)^2}\right).
\end{array}
\end{equation}

\smallskip
Finally, we will give examples of surfaces with representation of  type \eqref{CanW_MinLorSurf_R42}.
We will use the curves $\alpha_{k,l}$ defined by \eqref{exmp-W_natparm_alphap_R42}. 
Since the obtained surfaces are generated by linear functions, they are the natural analogue of the classical Enneper surface  in $\RR^3$.

\vskip 2mm
\begin{example}\label{exmp-MinLorSurf_R42_Enp1}
First, let $\M_1$ be the surface corresponding to the pair  $(\alpha_{2,1}\,, \alpha_{1,2})$,
where $(\omega_1\,,\omega_2)$ is chosen in such a way that $E<0$. $\M_1$ is a minimal surface of first type, since 
 $g'_1h'_1=2>0$ and $g'_2h'_2=2>0$. Applying \eqref{K_kappa-gh_MinLorSurf_R42}  we obtain the curvatures 
$K$ and $\varkappa$ of $\M_1$:
\begin{equation}\label{exmp-K_kappa-MinLorSurf_kind1_R42}
K         = \ds\frac{32(t_1-2t_2)^2+32(2t_1-t_2)^2}{|(t_1-2t_2)(2t_1-t_2)|^3}; \qquad
\varkappa = \ds\frac{32(t_1-2t_2)^2-32(2t_1-t_2)^2}{|(t_1-2t_2)(2t_1-t_2)|^3}.
\end{equation}
\end{example}

\vskip 2mm
\begin{example}\label{exmp-MinLorSurf_R42_Enp2}
Now, let $\M_2$ be the surface corresponding to the pair $(\alpha_{2,-1}\,, \alpha_{1,-2})$,
where $(\omega_1\,,\omega_2)$ is chosen in such a way that $E<0$.
In this case, $\M_2$ is a minimal surface of second type, since  $g'_1h'_1=-2<0$ and $g'_2h'_2=-2<0$. 
The curvatures  $K$ and $\varkappa$ of $\M_2$ are expressed again by  \eqref{exmp-K_kappa-MinLorSurf_kind1_R42}, since formulas \eqref{K_kappa-gh_MinLorSurf_R42} do not change if we replace $(g_i,h_i)$ with $(g_i,-h_i)$.
\end{example}

\vskip 2mm
\begin{example}\label{exmp-MinLorSurf_R42_Enp3}
Finally, let $\M_3$ be the surface  corresponding to the pair $(\alpha_{2,1}\,, \alpha_{1,-2})$,
where again $(\omega_1\,,\omega_2)$ is chosen in such a way that $E<0$. In this case, $\M_3$  is of  third type, since 
 $g'_1h'_1=2>0$ and $g'_2h'_2=-2<0$. The curvatures  $K$ and $\varkappa$ of $\M_3$ are given by:
\begin{equation*}\label{exmp-K_kappa-MinLorSurf_kind3_R42}
K         = \ds\frac{32(t_1+2t_2)^2-32(2t_1-t_2)^2}{|(t_1+2t_2)(2t_1-t_2)|^3}; \qquad
\varkappa = \ds\frac{32(t_1+2t_2)^2+32(2t_1-t_2)^2}{|(t_1+2t_2)(2t_1-t_2)|^3}.
\end{equation*}
\end{example}

\setcounter{equation}{0}

\section{Explicit solving  of the system of natural equations of minimal Lorentz surfaces in $\RR^4_2$}\label{sect-Sol_NatEq_MinLorSurf_R42}

Now, we will show that formulas  \eqref{K_kappa-gh_MinLorSurf_R42} for the curvatures $K$ and $\varkappa$ can be interpreted as formulas
for the general solution of the system of PDEs \eqref{Nat_Eq_K_kappa_MinLorSurf_R42}. 

\begin{thm}\label{Sol_NatEq_MinLorSurf_R42}
Let $(K,\varkappa)$ be a pair of functions which is a solution to the system of natural equations \eqref{Nat_Eq_K_kappa_MinLorSurf_R42} of  minimal Lorentz surfaces in $\RR^4_2$. 
Then, at least locally, $K$ and $\varkappa$ are expressed by formulas \eqref{K_kappa-gh_MinLorSurf_R42},
where $(g_i,h_i)$; $i=1, 2$ is a quadruple of real functions of one variable satisfying  conditions  \eqref{CanW_cond_MinLorSurf_R42},
 $g_1$ and $h_1$ depend on $u+v$, $g_2$ and $h_2$ depend on  $u-v$. 

Conversely, each such quadruple of real functions of one variable generates according to formulas \eqref{K_kappa-gh_MinLorSurf_R42} a solution  $(K,\varkappa)$ to the system of natural equations \eqref{Nat_Eq_K_kappa_MinLorSurf_R42} of  
minimal Lorentz surfaces in $\RR^4_2$.
\end{thm}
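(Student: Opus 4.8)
The plan is to reduce the theorem to facts already established in Sections \ref{K_kappa-IsoCurvs_R42}--\ref{W-MinLorSurf_R42}, using the equivalence between solutions of the natural system and minimal Lorentz surfaces of general type parametrized by canonical coordinates. First I would prove the converse direction, since it is the easier half: given a quadruple $(g_1,h_1,g_2,h_2)$ of real functions of one variable, with $g_1,h_1$ depending on $t_1=u+v$ and $g_2,h_2$ on $t_2=u-v$, satisfying \eqref{CanW_cond_MinLorSurf_R42}, Proposition \ref{prop-CanW_MinLorSurf_R42} produces a minimal Lorentz surface $\M$ of general type parametrized by canonical isotropic coordinates whose defining null curves have canonical Weierstrass representation \eqref{CanW_MinLorSurf_R42}. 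Passing to canonical isothermal coordinates $(u,v)$ via \eqref{isoterm-isotrop} (Proposition \ref{Prop-CanCoord_Equiv}), the curvatures $K$ and $\varkappa$ of $\M$ are, on the one hand, given by \eqref{K_kappa-gh_MinLorSurf_R42} in terms of exactly this quadruple, and on the other hand, by Theorem \ref{Thm-Nat_Eq_K_kappa_R42-tl}, they form a solution to the natural system \eqref{Nat_Eq_K_kappa_MinLorSurf_R42}. This establishes the converse.

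For the direct statement, I would start from a given solution $(K,\varkappa)$ of \eqref{Nat_Eq_K_kappa_MinLorSurf_R42} on a domain $\D$. By the converse part of Theorem \ref{Thm-Nat_Eq_K_kappa_R42-tl}, around any point there is a minimal Lorentz surface $\M$ of general type, parametrized by canonical (isothermal, hence also isotropic) coordinates, realizing $(K,\varkappa)$ as its Gauss and normal curvatures; one has to split into the cases $K^2-\varkappa^2>0$ (first or second type) and $K^2-\varkappa^2<0$ (third type) as in that theorem, but any of the guaranteed surfaces works. Applying Proposition \ref{prop-CanW_MinLorSurf_R42} to this $\M$ yields two pairs of smooth real functions $(g_i,h_i)$, $i=1,2$, satisfying \eqref{CanW_cond_MinLorSurf_R42}, with $g_1,h_1$ functions of $t_1=u+v$ and $g_2,h_2$ functions of $t_2=u-v$ (and $g_1'h_1'>0$, $g_2'h_2'<0$ in the third-type case). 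Feeding these functions into \eqref{K_kappa-gh_MinLorSurf_R42} recovers precisely the curvatures of $\M$, which by construction equal the prescribed $K$ and $\varkappa$. Hence $(K,\varkappa)$ is locally expressed by \eqref{K_kappa-gh_MinLorSurf_R42} from such a quadruple, as claimed.

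The only genuine subtlety — and the point I would be most careful about — is the compatibility of the two ``canonical'' frameworks: one must be sure that the surface furnished by Theorem \ref{Thm-Nat_Eq_K_kappa_R42-tl} is parametrized by the \emph{same} canonical coordinates $(u,v)$ in which $K$ and $\varkappa$ were given, so that no extra change of variables intervenes, and that the separation of variables (the $t_1$-dependence of $g_1,h_1$ versus the $t_2$-dependence of $g_2,h_2$) is exactly what the natural system encodes. This is guaranteed by Definition \ref{Can_Coord-MinLorSurf_R42-IsoCurvs} together with formula \eqref{MinSurf-IsoCurves}: the null curve $\alpha_1$ is a curve in the variable $t_1=u+v$ and $\alpha_2$ in $t_2=u-v$, so the Weierstrass data inherit exactly this dependence. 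No new computation is required beyond observing that \eqref{K_kappa-gh_MinLorSurf_R42} was derived in precisely this setting; the heart of the argument is the dictionary ``solutions of \eqref{Nat_Eq_K_kappa_MinLorSurf_R42} $\leftrightarrow$ minimal Lorentz surfaces of general type in canonical coordinates $\leftrightarrow$ canonical Weierstrass data,'' all three links of which have been forged earlier in the paper.
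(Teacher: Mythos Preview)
Your approach matches the paper's: both directions run through the dictionary ``solutions of \eqref{Nat_Eq_K_kappa_MinLorSurf_R42} $\leftrightarrow$ minimal Lorentz surfaces of general type in canonical coordinates (Theorem \ref{Thm-Nat_Eq_K_kappa_R42-tl}) $\leftrightarrow$ canonical Weierstrass data (Proposition \ref{prop-CanW_MinLorSurf_R42}),'' and your direct half is exactly the paper's argument.

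In the converse, however, the paper is more careful on two points you elide. First, system \eqref{Nat_Eq_K_kappa_MinLorSurf_R42} comes with a fixed sign $\delta$, and formulas \eqref{K_kappa-gh_MinLorSurf_R42} carry that same $\delta$, tied to the sign of $E$; the paper explicitly chooses $\omega_1,\omega_2$ in \eqref{CanW_MinLorSurf_R42} so that the sign of $E$ agrees with the prescribed $\delta$, a step you do not mention. Second, a quadruple with $g'_1h'_1<0$ and $g'_2h'_2>0$ satisfies \eqref{CanW_cond_MinLorSurf_R42}, but by \eqref{alphapp^2_fgh_R42} the resulting surface has ${\alpha''_1}^2=-1$ and ${\alpha''_2}^2=1$, which violates the convention of Definition \ref{Can_Coord-MinLorSurf_R42-IsoCurvs} for third-type canonical coordinates; hence Theorem \ref{Thm-Nat_Eq_K_kappa_R42-tl} cannot be invoked directly in those coordinates. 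The paper closes this gap by noting that \eqref{K_kappa-gh_MinLorSurf_R42} is invariant under the substitution $(g_i,h_i)\mapsto(g_i,-h_i)$, which flips the signs of $g'_ih'_i$ and reduces to the admissible case $g'_1h'_1>0$, $g'_2h'_2<0$. These are small patches, but they are needed to make the converse watertight; the ``subtlety'' you single out (coordinate compatibility) is in fact already secured by Theorem \ref{Thm-Nat_Eq_K_kappa_R42-tl} and is not where the real care is required.
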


\begin{proof} 
Let $(K,\varkappa)$ be a solution to the system of natural equations  \eqref{Nat_Eq_K_kappa_MinLorSurf_R42}.
Using Theorem \ref{Thm-Nat_Eq_K_kappa_R42-tl}, we get that locally there exists a minimal Lorentz surface of general type in $\RR^4_2$  parametrized by canonical coordinates and 
having  the given functions $K$ and $\varkappa$ as the Gauss and the normal curvature, respectively. Applying Proposition 
 \ref{prop-CanW_MinLorSurf_R42}  we obtain that this surface has a canonical Weierstrass representation of the form  \eqref{CanW_MinLorSurf_R42}. 
Thus, we have the quadruple of real functions  $(g_i,h_i)$; $i=1, 2$ satisfying  conditions  \eqref{CanW_cond_MinLorSurf_R42} and formulas \eqref{K_kappa-gh_MinLorSurf_R42} are valid for these functions.

Now, let $(g_i,h_i)$; $i=1, 2$ be a quadruple of real functions satisfying  \eqref{CanW_cond_MinLorSurf_R42}. Then, 
 \eqref{CanW_MinLorSurf_R42} determines a minimal Lorentz surface of general type $\M$ in $\RR^4_2$.
We may assume that $\omega_1$ and $\omega_2$ are chosen in such a way that the sign of $E$ coincides with the sign of 
$\delta$ in system \eqref{Nat_Eq_K_kappa_MinLorSurf_R42}.
If  $g'_1h'_1g'_2h'_2>0$, then  \eqref{alphapp^2_fgh_R42} implies that $\M$ is of  first or second type and the  coordinates are canonical. 
The Gauss curvature and the normal curvature of $\M$ expressed by \eqref{K_kappa-gh_MinLorSurf_R42} give a solution to system 
 \eqref{Nat_Eq_K_kappa_MinLorSurf_R42} according to Theorem \ref{Thm-Nat_Eq_K_kappa_R42-tl}.
If $g'_1h'_1>0$ and $g'_2h'_2<0$, then $\M$ is of  third type and is parametrized by canonical coordinates. Again, according to Theorem \ref{Thm-Nat_Eq_K_kappa_R42-tl} formulas 
\eqref{K_kappa-gh_MinLorSurf_R42} give a solution to system \eqref{Nat_Eq_K_kappa_MinLorSurf_R42}.
If $g'_1h'_1<0$ and $g'_2h'_2>0$, then the quadruple $(g_i,-h_i)$; $i=1, 2$ fulfills the conditions of the previous case
and therefore determines  a solution to the system by formulas \eqref{K_kappa-gh_MinLorSurf_R42}. On the other hand, 
formulas  \eqref{K_kappa-gh_MinLorSurf_R42} do not change if we  replace $(g_i,h_i)$ with $(g_i,-h_i)$. Hence, the initial quadruple also gives a solution to  system
 \eqref{Nat_Eq_K_kappa_MinLorSurf_R42}.
\end{proof}

Two different quadruples of real functions can  generate according to formulas \eqref{K_kappa-gh_MinLorSurf_R42} one and the same solution to the system of natural equations \eqref{Nat_Eq_K_kappa_MinLorSurf_R42}. 

\begin{thm}\label{Nat_eq_same_K_kappa_hatgh_gh_R42}
Let  $(g_i,h_i)$ and  $(\hat g_i,\hat h_i)$; $i=1, 2$ be  two quadruples of real functions satisfying conditions
 \eqref{CanW_cond_MinLorSurf_R42}. They generate according to formulas \eqref{K_kappa-gh_MinLorSurf_R42} one and the same solution to the system of natural equations \eqref{Nat_Eq_K_kappa_MinLorSurf_R42} if and only if they are related by linear-fractional transformations of the following type:
\begin{gather}\label{hatgh_gh-same_sol_R42}
\hat g_i = \ds\frac{a_1 g_i + b_1}{c_1 g_i + d_1}; \quad 
\hat h_i = \ds\frac{a_2 h_i + b_2}{c_2 h_i + d_2};
\quad 
\begin{array}{l}
a_1,b_1,c_1,d_1,a_2,b_2,c_2,d_2 \in \RR; \\
(a_1d_1-b_1c_1)(a_2d_2-b_2c_2) \neq 0; \  i=1, 2.
\end{array}
\end{gather}
\end{thm}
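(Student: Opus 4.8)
The plan is to prove the two implications separately: ``\eqref{hatgh_gh-same_sol_R42} implies that the two quadruples give the same solution'' is a direct computation, whereas the converse rests on the rigidity statements in Theorems~\ref{Thm-Nat_Eq_K_kappa_R42-tl} and \ref{W-proper_move_R42}.

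For the first implication I would note that each of the three expressions out of which $K$ and $\varkappa$ are assembled in \eqref{K_kappa-gh_MinLorSurf_R42}, namely
\[
\frac{g'_1g'_2}{(g_1-g_2)^2},\qquad \frac{h'_1h'_2}{(h_1-h_2)^2},\qquad \frac{\sqrt{|g'_1h'_1g'_2h'_2|}}{|(g_1-g_2)(h_1-h_2)|}\,,
\]
is invariant under an arbitrary pair of linear-fractional substitutions of the form \eqref{hatgh_gh-same_sol_R42}. This reduces to the elementary identities $\hat g'_i=\frac{(a_1d_1-b_1c_1)\,g'_i}{(c_1g_i+d_1)^2}$ and $\hat g_1-\hat g_2=\frac{(a_1d_1-b_1c_1)(g_1-g_2)}{(c_1g_1+d_1)(c_1g_2+d_1)}$ (and their analogues for $h$): upon substitution, the factors $(c_1g_i+d_1)$, $(c_2h_i+d_2)$ and the determinant factors cancel out of each expression entirely. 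For this cancellation the determinants only need to be nonzero, not $\pm1$, which is precisely why the hypothesis in \eqref{hatgh_gh-same_sol_R42} is stated with ``$\neq0$''. Since the sign $\delta$ in \eqref{K_kappa-gh_MinLorSurf_R42} is the one fixed by the system \eqref{Nat_Eq_K_kappa_MinLorSurf_R42} under consideration, it follows that $(K,\varkappa)$ is unchanged.

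For the converse I would translate the hypothesis into the language of surfaces. Choosing $\omega_i$ and $\hat\omega_i$ so that the sign of $E$ equals $\delta$ in both cases, Proposition~\ref{prop-CanW_MinLorSurf_R42} turns the two quadruples into two minimal Lorentz surfaces of general type $\M$ and $\hat\M$, both parametrized by canonical isotropic coordinates and both realizing the prescribed $K$ and $\varkappa$. The first step is to match the types: by \eqref{alphapp^2_fgh_R42} the sign of $g'_ih'_i$ is the causal character of $\alpha''_i$, and \eqref{K_kappa-gh_MinLorSurf_R42} is unchanged under $(g_i,h_i)\mapsto(g_i,-h_i)$ --- a transformation that is itself of the form \eqref{hatgh_gh-same_sol_R42} --- so after possibly applying this substitution to the hatted quadruple I may assume $\M$ and $\hat\M$ are of the same type (first type if $K^2-\varkappa^2>0$, the canonically oriented third type if $K^2-\varkappa^2<0$, by Proposition~\ref{DegP_kind123-K_kappa-tl}). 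Now Theorem~\ref{Thm-Nat_Eq_K_kappa_R42-tl} applies: a minimal Lorentz surface of a fixed type, parametrized by canonical coordinates and realizing a given solution $(K,\varkappa)$, is unique up to a proper motion of $\RR^4_2$, so $\hat\M=A\M+\vb$ with $A\in\mathbf{SO}(2,2,\RR)$. Consequently the corresponding null curves satisfy $\hat\alpha_i(t_i)=A\alpha_i(t_i)+\vb_i$ with one and the same $A$ (with the same parameter $t_i$; here a natural parameter stays natural because $A$ preserves ${\alpha''}^2$), and Theorem~\ref{W-proper_move_R42} yields the linear-fractional relations \eqref{hatfgh_fgh-proper_move_R42} for $\hat g_i$ and $\hat h_i$, with the same coefficients for $i=1,2$ and determinants $\pm1$ --- a special case of \eqref{hatgh_gh-same_sol_R42}. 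Finally, reinstating the auxiliary substitution $(g_i,h_i)\mapsto(g_i,-h_i)$ merely changes the sign of one of the two determinants and hence keeps the resulting relation inside the family \eqref{hatgh_gh-same_sol_R42}.

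The delicate point, and the one I expect to be the main obstacle, is exactly this type bookkeeping. A priori two quadruples producing the same $(K,\varkappa)$ need not produce congruent surfaces at all --- indeed $(g_i,h_i)$ and $(g_i,-h_i)$ give a first-type and a second-type surface with identical curvatures --- so one cannot invoke the rigidity of Theorem~\ref{Thm-Nat_Eq_K_kappa_R42-tl} before matching the types, and it is the observation that the type-changing move $h_i\mapsto-h_i$ already lies inside \eqref{hatgh_gh-same_sol_R42} that makes the argument close up. A lesser companion point is to confirm that a proper motion of the surface induces one and the same pair of $\mathbf{SL}(2,\RR)$ matrices --- hence identical coefficients $a_j,b_j,c_j,d_j$ --- for \emph{both} null curves; this is clear because the motion $A$ is common to the two curves, and its two admissible spinor lifts $\pm(B_1,B_2)$ define the same pair of linear-fractional transformations.
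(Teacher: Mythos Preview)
Your proof is correct. For the implication ``same solution $\Rightarrow$ \eqref{hatgh_gh-same_sol_R42}'' you follow essentially the paper's route: a sign-flip reduction to match types, then Theorems~\ref{Thm-Nat_Eq_K_kappa_R42-tl} and \ref{W-proper_move_R42}. The genuine difference is in the implication ``\eqref{hatgh_gh-same_sol_R42} $\Rightarrow$ same solution''. You verify by direct computation that the three building blocks of \eqref{K_kappa-gh_MinLorSurf_R42} are invariant under M\"obius substitutions, which is elementary and self-contained; the paper instead argues geometrically. After normalizing both determinants to $+1$ via the same sign-flip reduction, it lifts the pair of linear-fractional maps through the spinor correspondence \eqref{Spin_B1B2-A_R42} to a single proper orthochronous motion $A$, observes that $A\M$ has precisely the canonical Weierstrass data $(\hat g_i,\hat h_i)$ by \eqref{hatfgh_fgh-orthchr_move_R42}, and concludes $K=\hat K$, $\varkappa=\hat\varkappa$ from congruence. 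Your computation avoids the spinor machinery and makes transparent why the weaker hypothesis $(a_1d_1-b_1c_1)(a_2d_2-b_2c_2)\neq 0$ already suffices; the paper's route, in turn, explains \emph{why} these particular cross-ratio--type expressions are the right invariants, namely because pairs of real M\"obius maps are exactly the shadows of proper motions of $\RR^4_2$. One minor wording slip: after your reduction the two surfaces are of the \emph{same} type, not necessarily first type (if $\M$ happens to be of second type you never flip it), but Theorem~\ref{Thm-Nat_Eq_K_kappa_R42-tl} gives uniqueness within each type, so the argument is unaffected.
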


\begin{proof}
Note that both formulas \eqref{K_kappa-gh_MinLorSurf_R42} and \eqref{hatgh_gh-same_sol_R42} are invariant under substitutions of the form: 
$g_i \rightarrow -g_i$, $h_i \rightarrow -h_i$,  $\hat g_i \rightarrow -\hat g_i$, $\hat h_i \rightarrow -\hat h_i$, the substitutions being simultaneous for $i=1, 2$.
So, it is enough to consider only  the case $g'_1>0$, $h'_1>0$, $\hat g'_1 >0$, $\hat h'_1>0$.

Let  $(g_i,h_i)$ and  $(\hat g_i,\hat h_i)$ generate one and the same solution,  $\M$ and $\hat\M$ be the corresponding surfaces with canonical Weierstrass representation \eqref{CanW_MinLorSurf_R42}. 
We assume that  $\omega_i$ and $\hat\omega_i$ are chosen in such a way that the signs of  $E$ and $\hat E$ coincide with the sign of $\delta$ in the system. We have 
$K=\hat K$ and $\varkappa=\hat\varkappa$.
If $K^2-\varkappa^2>0$, then according to Proposition   \ref{DegP_kind123-K_kappa-tl}, from 
 $g'_1h'_1>0$,  $\hat g'_1 \hat h'_1>0$, and \eqref{alphapp^2_fgh_R42} we get that $\M$ and  $\hat\M$ are of  first type.  
Then, Theorem \ref{Thm-Nat_Eq_K_kappa_R42-tl} implies that $\M$ and $\hat\M$ are related by a proper motion in $\RR^4_2$. Hence, their corresponding null curves are also related by the same proper motion.
So, applying Theorem \ref{W-proper_move_R42} we obtain formulas  \eqref{hatfgh_fgh-proper_move_R42}, i.e. equalities 
 \eqref{hatgh_gh-same_sol_R42} hold true.
Analogously, if $K^2-\varkappa^2<0$ then, $\M$ and $\hat\M$ are of  third type and again  equalities \eqref{hatgh_gh-same_sol_R42} hold true.

Now, let  $(g_i,h_i)$ and  $(\hat g_i,\hat h_i)$ be related by  equalities \eqref{hatgh_gh-same_sol_R42}.
It follows from $g'_1>0$ and $\hat g'_1 >0$ that $a_1d_1-b_1c_1>0$. Analogously,  $a_2d_2-b_2c_2>0$.
We may assume that  $a_1d_1-b_1c_1=1$ and $a_2d_2-b_2c_2=1$, since each linear-fractional transformation can be considered as given by a matrix with  determinant $\pm 1$.
Let us consider the surface $\M$ with canonical Weierstrass representation  \eqref{CanW_MinLorSurf_R42} determined by the given functions $(g_i,h_i)$.
Assume that $\omega_i$ are chosen in such a  way that the sign of  $E$ coincides with the sign of $\delta$.
Let $A$ be a proper orthochronous motion in $\RR^4_2$ generated by the homomorphism  \eqref{Spin_B1B2-A_R42},
where $(B_1,B_2)$ are the matrices of the corresponding linear-fractional transformations. 
We consider the surface  $\hat\M$ obtained by $\M$ under the motion $A$. Then, the null curves corresponding to $\M$ and $\hat\M$ are related by the same motion. 
Therefore,  formulas \eqref{hatfgh_fgh-orthchr_move_R42} with the same linear-fractional transformations are valid for them.  
This means that $\hat\M$ has a canonical Weierstrass representation  \eqref{CanW_MinLorSurf_R42} by the given functions $(\hat g_i,\hat h_i)$.
The two surfaces are congruent by definition and therefore have equal Gauss and normal curvatures for which 
\eqref{K_kappa-gh_MinLorSurf_R42} is valid.
Hence,  $(g_i,h_i)$ and  $(\hat g_i,\hat h_i)$ generate one and the same solution to \eqref{Nat_Eq_K_kappa_MinLorSurf_R42}.
\end{proof}

To get examples of solutions to system  \eqref{Nat_Eq_K_kappa_MinLorSurf_R42} we will consider again the surfaces $\M_1$, $\M_2$, and $\M_3$  defined at the end of Section \ref{W-MinLorSurf_R42}.

\begin{example}\label{exmp-Sol_R42_Enp1}
Let $\M_1$ be the surface defined in Example  \ref{exmp-MinLorSurf_R42_Enp1}.
It is of  first type and its curvatures $K$ and $\varkappa$ expressed in terms of canonical isotropic coordinates $(t_1,t_2)$ are given  by \eqref{exmp-K_kappa-MinLorSurf_kind1_R42}. 
Changing  the parameters according to formulas \eqref{isoterm-isotrop}  we obtain $K$ and $\varkappa$ in terms of canonical isothermal coordinates  $(u,v)$:
\begin{equation}\label{exmp-K_kappa-MinLorSurf_kind1_uv_R42}
K         = \ds\frac{64 u^2 + 576 v^2}{|u^2 - 9 v^2|^3}; \qquad
\varkappa = \ds\frac{-384 u v}{|u^2 - 9 v^2|^3}.
\end{equation}
The last formulas give a solution to  system \eqref{Nat_Eq_K_kappa_MinLorSurf_R42} in the case $\delta=-1$  for which $K^2-\varkappa^2>0$. 

 Note that the surface $\M_2$ of Example \ref{exmp-MinLorSurf_R42_Enp2}  is a surface of second type for which the same formulas 
\eqref{exmp-K_kappa-MinLorSurf_kind1_R42} hold true. Hence, it generates the same solution \eqref{exmp-K_kappa-MinLorSurf_kind1_uv_R42} to system \eqref{Nat_Eq_K_kappa_MinLorSurf_R42}. 
\end{example}

\begin{example}\label{exmp-Sol_R42_Enp3}
Analogously, considering the surface $\M_3$ from Example  \ref{exmp-MinLorSurf_R42_Enp3} which is a surface of third type, we obtain the curvatures:
\begin{equation*}\label{exmp-K_kappa-MinLorSurf_kind3_uv_R42}
K         = \ds\frac{256 u^2 - 384 u v - 256 v^2}{|3 u^2 + 8 u v - 3 v^2|^3}; \qquad
\varkappa = \ds\frac{320 u^2 + 320 v^2}{|3 u^2 + 8 u v - 3 v^2|^3}.
\end{equation*}
The last formulas give a solution to system \eqref{Nat_Eq_K_kappa_MinLorSurf_R42} in the case $\delta=-1$ for which $K^2-\varkappa^2<0$.
\end{example}

\vspace{1.5ex} \textbf{Acknowledgments:}
The  third author is partially supported by the National Science Fund, Ministry of Education and Science of Bulgaria under contract DN 12/2.

\vspace{0.5ex}

\end{document}